\def\({\left(}
\def\){\right)}
\newtheorem{lema}{Lemma}[section]
\newtheorem*{teorema*}{Theorem}
\newtheorem{remark}[lema]{Remark}
\newtheorem{example}[lema]{Example}
\newtheorem{lemma}{Lemma}[section]
\newtheorem{theorem}[lema]{Theorem}
\newtheorem{proposition}[lema]{Proposition}
\newtheorem{definition}[lema]{Definition}
\hfill \fbox{}}
\hfill \fbox{}}
\def\beq{\begin{equation}}
\def\eeq{\end{equation}}
\def\epsilon{\varepsilon}
\begin{document}

\title{Arzela Ascoli theorem in quasi cone metric space}

\author{Dr. Shallu Sharma}
\address{Department of Mathematics, University of Jammu } 
\email{shallujamwal09@gmail.com}

\author{Iqbal Kour}
\address{Department of Mathematics, University of Jammu} \email{iqbalkour208@gmail.com}
\author{Sahil Billawria}
\address{Department of Mathematics,University Of Jammu, India}
\email{sahilbillawria2@gmail.com}

\keywords{Quasi cone metric space, forward and backward convergence, forward and backward completeness, forward and backward sequential compactness, first countable.}

\date{}

\begin{abstract}
In this paper we investigate Arzela Ascoli Theorem in quasi cone metric space, which is a generalization of metric space. We prove some interesting results using forward and backward toplologies, forward and backward continuity and forward and backward totally boundedness. A new notion forward and backward equicontinuity is also introduced.   
\end{abstract}
\subjclass{49J45, 54E35, 54E50, 54A20, 54D30, 47L07}

\maketitle{ }

\section{Introduction}
A distance function is said to be quasi metric if the symmetry axiom is dropped in the definition of metric (see, for example \cite{A}, \cite{YHC}). Numerous authors have provided various definitions of quasi metric (see, for example \cite{AK}). Since quasi metric is a generalized form of metric space the  concept of quasi cone metric space broadens the notion of quasi metric. Many researches have been carried out particularly in the area of fixed point theory (see, for example \cite{SN}).
In this paper we have used the definition given by Abdeljawad and Karapinar \cite{AK}. We study topologies in a quasi cone metric space and define equicontinuity for a family of functions defined on quasi cone metric space. We also prove Arzela Ascoli theorem in quasi cone metric space.     
\section{Preliminaries}
\begin{definition}\cite{HZ}
Let $\mathsf{E}$ be a real Banach space. A set $\mathsf{P} \subset \mathsf{E}$ is called a cone if
\begin {itemize}
\item[(i)] $\bar{\mathsf{P}}=\mathsf{P},\mathsf{P} \neq \{0\}$ and $\mathsf{P} \neq \phi.$
\item[(ii)]  $\mathsf{x},\mathsf{y} \in \mathsf{P}$ and $s,t\in \mathsf{P}$ where  $s,t$ are non negative real numbers, then $s \mathsf{x}+t\mathsf{y}\in \mathsf{P}.$
\item[(iii)] $\mathsf{x}=0$ when $\mathsf{x} \in \mathsf{P}$ and $-\mathsf{x} \in \mathsf{P}.$
\end {itemize}
\end{definition}
For a cone $\mathsf{P}$ in $\mathsf{E},$ we define partial ordering $\leq$ with respect to $\mathsf{P}$  as follows: $\mathsf{x}\leq\mathsf{y}$ if and only if $\mathsf{y}-\mathsf{x}\in \mathsf{P}$ and $\mathsf{x} \ll \mathsf{y}$ if and only if $\mathsf{y}-\mathsf{x}\in {\mathsf{P}^{\circ}},$ $\mathsf{x}\leq\mathsf{y}$ indicates that $\mathsf{x}<\mathsf{y}$ and $\mathsf{x}\neq \mathsf{y}.$

\begin{definition}\cite{AK}
Let $\mathsf{X} \neq \phi$ be a set and $d:\mathsf{X}\times \mathsf{X} \to \mathsf{E}$ be a mapping that satisfies   
\begin{itemize}
\item[(i)] $d(\mathsf{x},\mathsf{y})\geq 0 ~\forall~ \mathsf{x},\mathsf{y}\in \mathsf{X}.$
\item[(ii)] $d(\mathsf{x},\mathsf{y})=0$ if and only if $\mathsf{x}=\mathsf{y}.$
\item[(iii)] $d(\mathsf{x},\mathsf{y})\leq d(\mathsf{x},\mathsf{z})+d(\mathsf{z},\mathsf{y})~\forall~ \mathsf{x},\mathsf{y},\mathsf{z}\in \mathsf{X}.$
\end{itemize}
Then the ordered pair $(\mathsf X,d)$ is called a quasi cone metric space.
\end{definition}
Further, we shall assume that $(\mathsf {X},d)$ is a quasi cone metric space, $\mathsf{E}$ is a real Banach space, $\mathsf{P}$ is a cone in $\mathsf{E}$ with non empty interior and $\leq$ denotes the partial ordering with respect to $\mathsf P.$
\begin{definition}\cite{YHC}
A sequence $\{\mathsf{x_{n}}\}$ is forward convergent (resp. backward convergent) to $\mathsf{x_{0}}\in \mathsf{X}$ if for each $u\in \mathsf P^{\circ},$ there is $\mathsf{N}\in \mathbb{N}$ such that $d(\mathsf{x_{0}},\mathsf{x_{n}})\ll u$ (resp. $d(\mathsf{x_{n}},\mathsf{x_{0}})\ll u$) for all  $n\geq \mathsf{N}.$
\end{definition}
\begin{definition}\cite{YHC}
If every sequence in $\mathsf{X}$ has a forward (or backward) convergent subsequence with limit in $\mathsf{A}$, then the set  $\mathsf{A}\subset X$ is forward (or backward) sequentially compact.
\end{definition}
\begin{definition}\cite{YHC}
Suppose $(\mathsf{X},d_{\mathsf{X}})$ and $(\mathsf{Y},d_{\mathsf{Y}})$ are two quasi cone metric spaces. A function $f$ from $\mathsf{X}$ to $\mathsf{Y}$ is said to $ff$-continuous at a point $x\in \mathsf{X}$ if $f(x_{n})$ is forward convergent to $f$ in $\mathsf{Y}$ when $x_{n}$ is forward convergent to $x$ in $\mathsf{X}.$ The space of all $ff-$continuous functions from $\mathsf{X}$ to $\mathsf{Y}$ is denoted by $C(\mathsf{X},\mathsf{Y}).$ 
\end{definition}
\begin{definition}\cite{YHC}
Let $(\mathsf{X},d_{\mathsf{X}})$ and $(\mathsf{Y},d_{\mathsf{Y}})$ be quasi cone metric spaces. The space of functions from $\mathsf{X}$ to $\mathsf{Y}$ is denoted by $\mathsf{Y}^{\mathsf{X}}.$ Let $f,g\in \mathsf{Y}^{\mathsf{X}}.$ Then the uniform metric associated with this space is defined as:
$$\rho(g,h)=\sup\{\bar{d}_{\mathsf{Y}}(g(\mathsf x),h(\mathsf x)):\mathsf x\in \mathsf{X}\}.$$ 
Here $\bar{d}_{\mathsf{Y}}(g(\mathsf x),h(\mathsf x))=\min \{{d}_{\mathsf{Y}}(g(\mathsf x),h(\mathsf x)),1\}. $
 \end{definition}
\section{Arzela Ascoli Theorem in quasi cone metric space}  

\begin{definition}
The forward open balls in quasi cone metric space $(\mathsf X,d)$ are defined by:
$$\mathsf B_{f}(\mathsf x,\mathsf u)=\{y\in \mathsf {X}:d(\mathsf x,\mathsf y)\ll u\},~\mbox {for}~ \mathsf x\in \mathsf X~\mbox{ and}~ u\in \mathsf{P}^{\circ}.$$\\ 
Similarly, the backward open balls are defined by:
$$\mathsf B_{b}(\mathsf x,u)=\{\mathsf y\in \mathsf {X}:d(\mathsf y,\mathsf x)\ll u\},~\mbox{for}~ x\in \mathsf {X} ~\mbox{and}~ u\in \mathsf{P}^{\circ}.$$
\end{definition}
\begin{example}
Let $\mathsf{X}=\mathbb{R},\mathsf{E}=\mathbb{R},\mathsf{P}=[0,\infty).$ Then $\mathsf{P}^{\circ}=(0,\infty).$ Define $d:\mathsf{X}\times\mathsf{X} \to \mathsf{E}$ by 
\begin{equation*}
d(\mathsf x,\mathsf y)=
\left\{
\begin{aligned}
\mathsf y-\mathsf x~~~~~if~~\mathsf y\geq \mathsf x;\\
1~~~~~otherwise.
\end{aligned}
\right.
\end{equation*}
Then $(\mathsf{X},d)$ is a quasi cone metric space. The forward open ball is $\mathsf{B}_{f}(\mathsf x,\mathsf{u})=[\mathsf x,\mathsf x+u),$ where $1\ll u$ and the backward open ball is 
$\mathsf{B}_{b}(\mathsf x,\mathsf{u})= (\mathsf x-u,\mathsf x],$ where $1\ll u.$
\end{example}
\begin{definition}
The forward open balls 
$$\mathsf B_{f}(\mathsf x,u)=\{\mathsf y\in \mathsf {X}:d(\mathsf x,\mathsf y)\ll u\},~\mbox{for}~ \mathsf x\in \mathsf X ~\mbox{and}~u\in \mathsf{P}^{\circ}.$$
generate the forward topology $\mathsf{T}_{f},$ where $\mathsf{T}_{f}$ is induced by  $d.$
Similarly, the backward open balls 
$$\mathsf B_{f}(\mathsf x,u)=\{\mathsf y\in \mathsf {X}:d(\mathsf y,\mathsf x)\ll u\},~\mbox{for}~ \mathsf x\in \mathsf X ~\mbox{and}~u\in \mathsf{P}^{\circ}.$$
generate the backward topology $\mathsf{T}_{b},$ where $\mathsf{T}_{b}$ is also induced by  $d.$
\end{definition}
 A set $H \subset \mathsf{T}_{f}$ is said to be forward closed if $H^{c}\in \mathsf{T}_{f}$ and a set $H \subset \mathsf{T}_{b}$ is said to be backward closed if $H^{c}\in \mathsf{T}_{b}.$

\begin{remark}
Suppose $(\mathsf {X},d)$ is a quasi cone metric space. Then $(\mathsf {X}, \mathsf{T}_{f})$~\mbox{and}~$(\mathsf {X},\mathsf{T}_{b})$ are $T1$-spaces.
\end{remark}

\begin{remark}
Let $\mathbb{B}_{f}=\{ \mathsf{B}_{f}(\mathsf x, u):\mathsf x\in \mathsf{X},u\in \mathsf{P}^{\circ}\}$ denote the basis for forward topology $\mathsf{T}_{f}$ on quasi cone metric space $(\mathsf{X},d)$  and $\mathbb{B}_{b}=\{ \mathsf{B}_{b}(\mathsf x, u):\mathsf x \in \mathsf{X},u \in \mathsf{P}^{\circ}\}$ denote the basis for backward topology $\mathsf{T}_{b}$ on quasi cone metric space $(\mathsf {X},d).$ 
\end{remark}

\begin{theorem}\label{thm:Th1}
A quasi cone metric space  $(\mathsf {X},d)$ with forward topology $\mathsf{T}_{f}$ is $T_{2}$ -space if forward convergence in $(\mathsf {X},d)$ implies backward convergence.
\end{theorem}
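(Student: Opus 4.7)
The plan is to argue by contradiction. Assume $(X,T_f)$ is not $T_2$, so that there are distinct points $x,y\in X$ such that every forward neighborhood of $x$ meets every forward neighborhood of $y$. Since the forward open balls $B_f(x,u)$, $u\in P^{\circ}$, form a neighborhood base at $x$ (and similarly at $y$), it suffices to test basic forward neighborhoods.

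I would then build a sequence witnessing the failure of Hausdorffness. Fix any $u\in P^{\circ}$; since scalar multiplication by a positive scalar is a linear homeomorphism of $E$ that preserves $P$, we have $u/n\in P^{\circ}$ for every $n\in\NN$. By the non-Hausdorff assumption, $B_f(x,u/n)\cap B_f(y,u/n)\neq\emptyset$, so pick $z_n$ in this intersection. By construction $d(x,z_n)\ll u/n$ and $d(y,z_n)\ll u/n$ for every $n$.

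Next I would check that $\{z_n\}$ is forward convergent to both $x$ and $y$. The cone fact I will need is: for each $w\in P^{\circ}$ there exists $N$ with $u/n\ll w$ whenever $n\geq N$; this holds because $w-u/n\to w\in P^{\circ}$ in norm and $P^{\circ}$ is open, so $w-u/n\in P^{\circ}$ eventually. Combined with the construction of $z_n$, this gives $d(x,z_n)\ll w$ and $d(y,z_n)\ll w$ for all large $n$, so $z_n$ is forward convergent to both $x$ and $y$. By the hypothesis of the theorem, forward convergence implies backward convergence, so in particular $z_n$ is backward convergent to $y$: for each $v\in P^{\circ}$, $d(z_n,y)\ll v$ for all sufficiently large $n$.

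To conclude, given any $w\in P^{\circ}$ I would choose $N$ so that for $n\geq N$ both $d(x,z_n)\ll w/2$ and $d(z_n,y)\ll w/2$; the triangle inequality then yields $d(x,y)\leq d(x,z_n)+d(z_n,y)\ll w$. Since $w\in P^{\circ}$ is arbitrary, the standard cone fact that $0\leq a\ll w$ for every $w\in P^{\circ}$ forces $a=0$ (apply it to $w/n$ and let $n\to\infty$ in norm, using that $P$ is closed) gives $d(x,y)=0$, hence $x=y$, contradicting our initial choice. The principal obstacle is not one deep step but a careful handling of the cone algebra: verifying that $u/n\in P^{\circ}$ and shrinks below every $w\in P^{\circ}$, that $\ll$ behaves correctly under sums and under the triangle inequality $\leq$, and that $d(x,y)\ll w$ for every $w\in P^{\circ}$ forces $d(x,y)=0$. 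These are standard in the cone-metric literature but deserve explicit verification in the written proof.
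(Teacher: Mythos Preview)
Your proof is correct and follows essentially the same route as the paper: assume two distinct points cannot be separated by forward balls, pick witnesses in the intersections, use the hypothesis to turn forward convergence into backward convergence, and apply the triangle inequality together with the closedness of the cone to force $d(x,y)=0$. The only cosmetic difference is that you build an explicit sequence $z_n\in B_f(x,u/n)\cap B_f(y,u/n)$, whereas the paper works with a family $z_u$ indexed by $u\in P^{\circ}$; your formulation is a bit more careful, but the argument is the same.
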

\begin{proof}
Let $\mathsf x$ and $\mathsf y$ be two distinct points in $\mathsf {X}.$ Suppose that for all $u\in \mathsf{P}^{\circ}, \mathsf B_{f}(\mathsf x,u) \cap \mathsf B_{f}(\mathsf y,u) \neq \phi.$  Then there exists $z_{u} \in  \mathsf B_{f}(\mathsf x,u) \cap \mathsf B_{f}(\mathsf y,u).$ Now, if $\mathsf z_{u} \in  \mathsf B_{f}(\mathsf x,u) \cap \mathsf B_{f}(\mathsf y,u),$ then $d(\mathsf x,z_{u})\ll u$ and $d(\mathsf y,z_{u}) \ll u.$ Therefore, $\mathsf z_{u}$ is forward convergent to $x$ and $\mathsf z_{u}$ is forward convergent to $\mathsf y.$ Since forward convergence implies backward convergence. Then $\mathsf z_ {u}$ is backward convergent to $\mathsf y.$ Hence, we have $d(\mathsf x,\mathsf y)\leq d(\mathsf x,\mathsf z_{u}) +d(\mathsf z_{u}, \mathsf y)\ll u+u = 2u,$ for all $u\in \mathsf{P}^{\circ},\mathsf{P}^{\circ}\neq \phi.$ Therefore, for each $n\in \mathbb{N}$ we can choose $v\in \mathsf{P}^{\circ}.$ Then $\frac {v} {n}-d(\mathsf x,\mathsf y) \in  \mathsf{P}.$ If we take $n\to \infty,$ then $-d(\mathsf x,\mathsf y)\in \mathsf{P}$ as a cone is closed. Hence $d(\mathsf x,\mathsf y)=0$ which implies that $\mathsf x=\mathsf y,$ a contradiction. Hence the proof.
\end{proof}
\begin{theorem}
A quasi cone metric space  $(\mathsf {X},d)$ with backward topology $\mathsf{T}_{b}$ is $T_{2}$-space if backward convergence in $(\mathsf {X},d)$ implies forward convergence.
\end{theorem}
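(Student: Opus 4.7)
The plan is to imitate the proof of Theorem \ref{thm:Th1} almost verbatim, but with the roles of the forward and backward structures swapped throughout. Take two distinct points $\mathsf{x}, \mathsf{y} \in \mathsf{X}$ and argue by contradiction: suppose that for every $u \in \mathsf{P}^{\circ}$ the two backward open balls satisfy $\mathsf{B}_{b}(\mathsf{x}, u) \cap \mathsf{B}_{b}(\mathsf{y}, u) \neq \phi$. Pick $\mathsf{z}_{u}$ in this intersection, which by the definition of the backward ball gives $d(\mathsf{z}_{u}, \mathsf{x}) \ll u$ and $d(\mathsf{z}_{u}, \mathsf{y}) \ll u$, i.e.\ $\mathsf{z}_{u}$ is (viewed as a family indexed by $u$) backward convergent to both $\mathsf{x}$ and $\mathsf{y}$.

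Next I would invoke the hypothesis that backward convergence implies forward convergence. Applied to the backward convergence $\mathsf{z}_{u} \to \mathsf{x}$, this yields forward convergence $\mathsf{z}_{u} \to \mathsf{x}$, hence $d(\mathsf{x}, \mathsf{z}_{u}) \ll u$. Combined with the already-available $d(\mathsf{z}_{u}, \mathsf{y}) \ll u$, the triangle inequality gives
\[
d(\mathsf{x}, \mathsf{y}) \leq d(\mathsf{x}, \mathsf{z}_{u}) + d(\mathsf{z}_{u}, \mathsf{y}) \ll 2u
\]
for every $u \in \mathsf{P}^{\circ}$. Note that it is crucial to use the hypothesis on the correct term: $d(\mathsf{z}_{u}, \mathsf{y})$ is automatically small from backward convergence, while $d(\mathsf{x}, \mathsf{z}_{u})$ requires the implication backward $\Rightarrow$ forward.

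To finish, I would fix any $v \in \mathsf{P}^{\circ}$ (nonempty by assumption) and apply the previous display with $u = v/n$, so that $v/n - d(\mathsf{x}, \mathsf{y}) \in \mathsf{P}$ for each $n \in \mathbb{N}$. Letting $n \to \infty$ and using that $\mathsf{P}$ is closed, the limit $-d(\mathsf{x}, \mathsf{y}) \in \mathsf{P}$ follows, and combined with $d(\mathsf{x}, \mathsf{y}) \in \mathsf{P}$ and antisymmetry of the ordering we obtain $d(\mathsf{x}, \mathsf{y}) = 0$, forcing $\mathsf{x} = \mathsf{y}$, a contradiction. Therefore some $u \in \mathsf{P}^{\circ}$ separates $\mathsf{x}$ and $\mathsf{y}$ by disjoint backward open balls, which gives the $T_{2}$ property.

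There is essentially no genuine obstacle: the result is the mirror image of Theorem \ref{thm:Th1}, and the only care needed is bookkeeping the asymmetry of $d$, in particular making sure the hypothesis (backward $\Rightarrow$ forward) is applied to convert $d(\mathsf{z}_{u}, \mathsf{x}) \ll u$ into $d(\mathsf{x}, \mathsf{z}_{u}) \ll u$, which is the term the triangle inequality actually needs.
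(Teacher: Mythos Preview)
Your proposal is correct and is exactly the mirror-image of the proof of Theorem~\ref{thm:Th1} that the paper intends; indeed the paper's own proof reads in full ``Omitted as follows from Theorem~\ref{thm:Th1}.'' The only cosmetic slip is that with $u=v/n$ the display yields $2v/n - d(\mathsf{x},\mathsf{y})\in\mathsf{P}$ rather than $v/n - d(\mathsf{x},\mathsf{y})\in\mathsf{P}$, but the limit argument is unaffected.
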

\begin{proof} Omitted as follows from Theorem \ref{thm:Th1} .
\end{proof}

\begin{proposition}\label{prop:P}
In a quasi cone metric space $(\mathsf {X},d), \mathsf{T}_{f}\subseteq \mathsf{T}_{b}$ iff for every $\mathsf x\in \mathsf{X}$ and every $\mathsf{B}_{b}\in \mathbb{B}_{b}$
that contains $\mathsf x,$ there exists  $\mathsf{B}_{f}\in \mathbb{B}_{f}$ such that $\mathsf x\in \mathsf{B}_{f} \subseteq \mathsf{B}_{b}.$
\end{proposition}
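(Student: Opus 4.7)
The plan is to apply the standard base-comparison lemma from general topology: if $\mathcal{B}_{1}$ and $\mathcal{B}_{2}$ are bases generating topologies $T_{1}$ and $T_{2}$ on the same set $X$, then $T_{1}\subseteq T_{2}$ is equivalent to every $B_{1}\in\mathcal{B}_{1}$ being $T_{2}$-open, which, unpacking the defining property of a base, is equivalent to: for every $x\in X$ and every $B_{1}\in\mathcal{B}_{1}$ with $x\in B_{1}$, there exists $B_{2}\in\mathcal{B}_{2}$ such that $x\in B_{2}\subseteq B_{1}$. Since Remark 2 records that $\mathbb{B}_{f}$ and $\mathbb{B}_{b}$ are indeed bases for $\mathsf{T}_{f}$ and $\mathsf{T}_{b}$, the proposition will follow by specializing this lemma to these two bases.

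Before launching into the two implications I would record the small but necessary observation that every basic ball $\mathsf{B}_{f}(\mathsf{x},u)$ and $\mathsf{B}_{b}(\mathsf{x},u)$ actually contains its own center $\mathsf{x}$, because $d(\mathsf{x},\mathsf{x})=0\ll u$ for any $u\in\mathsf{P}^{\circ}$. This is what makes the hypothesis ``$\mathsf{B}_{b}\in\mathbb{B}_{b}$ that contains $\mathsf{x}$'' non-vacuous and allows us to center the chosen forward ball at the same point. For one direction I would assume the topology inclusion, fix $\mathsf{x}\in\mathsf{X}$ and a basic ball containing $\mathsf{x}$, and invoke the base property in the finer topology to extract a basic ball of the other type around $\mathsf{x}$ sitting inside the original one. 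For the other direction I would pick an arbitrary open set $U$ in the smaller topology together with a point $x\in U$, use the base property to find a basic ball of one kind around $x$ inside $U$, and then apply the hypothesis to shrink it to a basic ball of the other kind still containing $x$ and contained in $U$; this exhibits $U$ as a union of basic members of the second base, hence as open in the larger topology.

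No genuinely quasi-cone-metric-specific input is needed beyond the basic-ball-centers observation; the main obstacle is really just careful bookkeeping, namely, keeping straight which base refines which topology so that all the nested inclusions point the correct way and the hypothesis is deployed in the form actually available. Since this is precisely what the base-comparison lemma packages, the proof reduces to a faithful transcription of that standard argument into the forward/backward notation of quasi cone metric spaces, and the write-up should be only a few lines long.
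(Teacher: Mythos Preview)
Your plan is exactly what the paper has in mind: the authors' proof is simply ``Straightforward hence omitted,'' and the intended content is precisely the standard base--comparison lemma you cite, applied to the bases $\mathbb{B}_{f}$ and $\mathbb{B}_{b}$ of Remark~3.5. So at the level of method there is nothing to add.

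There is, however, a genuine bookkeeping gap in your proposal --- the very thing you warn about but do not actually carry out. If you specialize the lemma with $T_{1}=\mathsf{T}_{f}$, $\mathcal{B}_{1}=\mathbb{B}_{f}$ and $T_{2}=\mathsf{T}_{b}$, $\mathcal{B}_{2}=\mathbb{B}_{b}$, you obtain: $\mathsf{T}_{f}\subseteq\mathsf{T}_{b}$ iff for every $\mathsf{x}$ and every \emph{forward} basic ball $\mathsf{B}_{f}\ni\mathsf{x}$ there is a \emph{backward} basic ball $\mathsf{B}_{b}$ with $\mathsf{x}\in\mathsf{B}_{b}\subseteq\mathsf{B}_{f}$. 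The proposition as printed has the $f$'s and $b$'s interchanged; its right-hand side says that every backward basic ball is a union of forward basic balls, i.e.\ $\mathbb{B}_{b}\subseteq\mathsf{T}_{f}$, which is equivalent to $\mathsf{T}_{b}\subseteq\mathsf{T}_{f}$, the reverse inclusion. Thus a faithful transcription of the base--comparison lemma does \emph{not} yield the statement exactly as written; it yields it with the roles of $f$ and $b$ swapped (equivalently, with $\subseteq$ reversed). Your write-up should flag this rather than asserting that ``the proposition will follow by specializing this lemma,'' since a reader who actually checks the directions will see the mismatch.
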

 
\begin{proof}
Straightforward hence omitted.
\end{proof}

\begin{lemma}
Suppose $(\mathsf{X},d)$ is a quasi cone metric space in which forward convergence implies backward convergence. Then $\mathsf{T}_{f}\subseteq \mathsf{T_{b}}.$
\end{lemma}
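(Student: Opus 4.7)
My plan is to invoke Proposition~\ref{prop:P}, which reduces $\mathsf{T}_{f}\subseteq\mathsf{T}_{b}$ to the following local statement: for every $\mathsf{x}\in\mathsf{X}$ and every backward basic ball $\mathsf{B}_{b}(\mathsf{x}_{0},u)$ containing $\mathsf{x}$, there is a forward basic ball $\mathsf{B}_{f}(\mathsf{x},v)\subseteq\mathsf{B}_{b}(\mathsf{x}_{0},u)$. Fix such $\mathsf{x},\mathsf{x}_{0},u$, so that $d(\mathsf{x},\mathsf{x}_{0})\ll u$, and choose $w\in\mathsf{P}^{\circ}$ with $w+d(\mathsf{x},\mathsf{x}_{0})\ll u$ (any $w\ll u-d(\mathsf{x},\mathsf{x}_{0})$ works, since $u-d(\mathsf{x},\mathsf{x}_{0})\in\mathsf{P}^{\circ}$). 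Then, by the triangle inequality $d(\mathsf{y},\mathsf{x}_{0})\leq d(\mathsf{y},\mathsf{x})+d(\mathsf{x},\mathsf{x}_{0})$, it suffices to produce $v\in\mathsf{P}^{\circ}$ such that $d(\mathsf{x},\mathsf{y})\ll v$ forces $d(\mathsf{y},\mathsf{x})\ll w$; because the quasi cone metric is not symmetric, this is precisely where the hypothesis must intervene.

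To obtain such $v$ I would argue by contradiction, in the spirit of the sequence/net manipulation used in the proof of Theorem~\ref{thm:Th1}. If no such $v$ existed, fix any $u_{0}\in\mathsf{P}^{\circ}$ and, for each $n\in\mathbb{N}$, select $\mathsf{z}_{n}$ with $d(\mathsf{x},\mathsf{z}_{n})\ll u_{0}/n$ but $d(\mathsf{z}_{n},\mathsf{x})\not\ll w$. Since $\mathsf{P}^{\circ}$ is open in $\mathsf{E}$, for every $\eta\in\mathsf{P}^{\circ}$ the element $\eta-u_{0}/n$ lies in $\mathsf{P}^{\circ}$ for all large $n$, so $u_{0}/n\ll\eta$ eventually, and hence $d(\mathsf{x},\mathsf{z}_{n})\ll\eta$ eventually. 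Thus $(\mathsf{z}_{n})$ is forward convergent to $\mathsf{x}$; by hypothesis it is then backward convergent to $\mathsf{x}$, giving $d(\mathsf{z}_{n},\mathsf{x})\ll w$ for all sufficiently large $n$, which contradicts the construction. This yields the required $v$, and hence the local condition of Proposition~\ref{prop:P}.

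The main obstacle is the passage from the pointwise/sequential hypothesis (forward $\Rightarrow$ backward convergence) to the uniform ball-inclusion statement needed to apply Proposition~\ref{prop:P}; the contradiction loop above is the bridge, and it relies on the observation that the family $\{u_{0}/n\}_{n\in\mathbb{N}}$ is cofinal below every $\eta\in\mathsf{P}^{\circ}$ in the $\ll$-order, a fact that follows from the openness of $\mathsf{P}^{\circ}$ in the ambient Banach space. A minor bookkeeping point is confirming that $a\leq b$ and $b\ll c$ imply $a\ll c$ (needed for the final triangle step), which follows from $\mathsf{P}^{\circ}+\mathsf{P}\subseteq\mathsf{P}^{\circ}$.
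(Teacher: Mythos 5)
Your proof is correct, but it takes a genuinely different (and more careful) route than the paper's. The paper's own proof is a three-line argument: it takes a single point $\mathsf x$ in a forward ball about $\mathsf y$, declares that $\mathsf x$ "is forward convergent to $\mathsf y$", invokes the hypothesis to get backward convergence, and concludes $\mathsf x$ lies in the corresponding backward ball, then cites Proposition~\ref{prop:P}. This glosses over two points that your argument handles explicitly. First, membership $d(\mathsf y,\mathsf x)\ll u$ for one fixed $u$ is not forward convergence of the constant sequence $(\mathsf x,\mathsf x,\dots)$ to $\mathsf y$ --- that would require $d(\mathsf y,\mathsf x)\ll u$ for \emph{every} $u\in\mathsf{P}^{\circ}$, forcing $\mathsf x=\mathsf y$; your contradiction loop, which builds a genuine sequence $\mathsf z_{n}$ with $d(\mathsf x,\mathsf z_{n})\ll u_{0}/n$ and uses the openness of $\mathsf{P}^{\circ}$ to show $\{u_{0}/n\}$ is cofinal below every interior point, is exactly the bridge from the sequential hypothesis to the ball-inclusion statement that the paper's proof lacks. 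Second, Proposition~\ref{prop:P} requires a forward ball centred at the arbitrary point $\mathsf x$ of $\mathsf{B}_{b}(\mathsf x_{0},u)$, not at the centre $\mathsf x_{0}$; your recentering step via $w+d(\mathsf x,\mathsf x_{0})\ll u$ and the triangle inequality supplies this, whereas the paper only compares balls with a common centre. In short, your proof buys rigour at the cost of length; the paper's version is shorter but, as written, does not actually establish the inclusion.
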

\begin{proof}
Let $u$ be an arbitrary point of $\mathsf{P}^{\circ}$ and $\mathsf x\in \mathsf{B}_{f}(\mathsf y,u).$ Then $d(\mathsf x,\mathsf y)\ll u.$ Therefore, $\mathsf x$ is forward convergent to $\mathsf y.$ Also, 
$x$ is backward convergent to $\mathsf y$ as forward convergence implies backward convergence. Hence $d(\mathsf y,\mathsf x)\ll u$ implies that $\mathsf x\in \mathsf{B}_{b}(\mathsf y,\mathsf u).$ By using Proposition \ref{prop:P}, we obtain the desired result.

\begin{remark}
If a quasi cone metric space $(\mathsf{X},d)$ is compact and Hausdorff, then $\mathsf{T}_{f}$ and $\mathsf{T}_{b}$ are equivalent or $\mathsf{T}_{f}$ and $\mathsf{T}_{b}$ are incomparable.
\end{remark}       
 
\end{proof}
\begin{theorem}\label{thm:TH2}
Every quasi cone metric space $(\mathsf {X},d)$ with forward topology $\mathsf{T}_{f}$ is first countable.
\end{theorem}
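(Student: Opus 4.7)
The plan is to exhibit, at each point $\mathsf x\in\mathsf X$, an explicit countable local base for $\mathsf T_f$, built from positive scalar multiples of a single anchor vector in $\mathsf P^\circ$.

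First, since by assumption $\mathsf P^\circ\neq\phi$, I would fix any $c\in\mathsf P^\circ$ and set $u_n=\tfrac{c}{n}$ for $n\in\mathbb N$. Each $u_n$ lies in $\mathsf P^\circ$ because $\mathsf P^\circ$ is stable under multiplication by positive reals. The key auxiliary fact I will need is: for every $v\in\mathsf P^\circ$ there exists $n\in\mathbb N$ with $u_n\ll v$, equivalently $v-\tfrac{c}{n}\in\mathsf P^\circ$. This holds because $\mathsf P^\circ$ is norm-open and $v-\tfrac{c}{n}\to v$ in $\mathsf E$, so for $n$ large enough $v-\tfrac{c}{n}$ remains in $\mathsf P^\circ$.

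Next, fix $\mathsf x\in\mathsf X$ and put $\mathcal U_{\mathsf x}=\{\mathsf B_f(\mathsf x,u_n):n\in\mathbb N\}$. I would prove $\mathcal U_{\mathsf x}$ is a local base at $\mathsf x$. Let $U\in\mathsf T_f$ with $\mathsf x\in U$. By the definition of the topology generated by $\mathbb B_f$, choose a basic ball $\mathsf B_f(\mathsf y,u)$ with $\mathsf x\in\mathsf B_f(\mathsf y,u)\subseteq U$, so that $d(\mathsf y,\mathsf x)\ll u$, i.e.\ $v:=u-d(\mathsf y,\mathsf x)\in\mathsf P^\circ$. Applying the auxiliary fact to $v$, pick $n$ with $u_n\ll v$. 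Then for any $\mathsf z\in\mathsf B_f(\mathsf x,u_n)$ the triangle inequality gives $d(\mathsf y,\mathsf z)\leq d(\mathsf y,\mathsf x)+d(\mathsf x,\mathsf z)$, while $d(\mathsf x,\mathsf z)\ll u_n\ll u-d(\mathsf y,\mathsf x)$. Combining these with the standard cone identity $\mathsf P^\circ+\mathsf P\subseteq\mathsf P^\circ$ yields $u-d(\mathsf y,\mathsf z)\in\mathsf P^\circ$, hence $\mathsf z\in\mathsf B_f(\mathsf y,u)\subseteq U$. Therefore $\mathsf B_f(\mathsf x,u_n)\subseteq U$, establishing that $\mathcal U_{\mathsf x}$ is a countable base of neighbourhoods at $\mathsf x$.

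The main obstacle is really the two small interior-cone technicalities: the existence of $n$ with $c/n\ll v$ for every $v\in\mathsf P^\circ$, and the absorption $\mathsf P^\circ+\mathsf P\subseteq\mathsf P^\circ$. The asymmetry of $d$ is harmless here because we only apply the triangle inequality in the forward direction $d(\mathsf y,\mathsf z)\leq d(\mathsf y,\mathsf x)+d(\mathsf x,\mathsf z)$; once the cone facts are in place, shrinking $\mathsf B_f(\mathsf x,u_n)$ inside $\mathsf B_f(\mathsf y,u)$ is immediate.
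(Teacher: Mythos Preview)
Your proof is correct and follows essentially the same approach as the paper: both fix a single interior point $c\in\mathsf P^\circ$ and take $\{\mathsf B_f(\mathsf x,c/n):n\in\mathbb N\}$ as the countable local base at $\mathsf x$, relying on the fact that for every $v\in\mathsf P^\circ$ some $c/n\ll v$. Your version is simply more careful than the paper's in two places --- you justify the auxiliary fact $c/n\ll v$ via the norm-openness of $\mathsf P^\circ$, and you treat a general basis ball $\mathsf B_f(\mathsf y,u)$ containing $\mathsf x$ rather than tacitly assuming the neighbourhood is already a ball centered at $\mathsf x$ --- but the underlying idea is identical.
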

\begin{proof}
Let $u$ be an arbitrary point of $\mathsf{P}^{\circ}.$ Consider the family $\mathbb{B}_{f}=\{ \mathsf{B_{f}(\mathsf x,\frac{1}{n} u)}:n\in \mathbb{N},\mathsf x\in \mathsf {X}\}.$ The family $\mathbb{B}_{f}$ is a countable collection of neighborhoods. Suppose that  $\mathsf{B}_{f}(\mathsf x,v)$ is a neighborhood. Then there exists $n\in \mathbb{N}$ such that $\frac{1}{n} u \ll v.$ Hence $\mathsf{B}_{f}(\mathsf x,\frac{1}{n} u)\subset \mathsf{B}_{f}(\mathsf x,v).$ This completes the proof.   
\end{proof}
\begin{theorem}
Every quasi cone metric space $(\mathsf {X},d)$ with backward topology $\mathsf{T}_{b}$ is first countable.
\end{theorem}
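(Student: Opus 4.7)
The plan is to mirror the argument of Theorem \ref{thm:TH2} verbatim, replacing forward open balls $\mathsf{B}_{f}$ by backward open balls $\mathsf{B}_{b}$ and the topology $\mathsf{T}_{f}$ by $\mathsf{T}_{b}$ throughout. Concretely, I would exhibit at each point $\mathsf{x}\in\mathsf{X}$ a countable local base for $\mathsf{T}_{b}$.

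First I would fix an arbitrary $u\in\mathsf{P}^{\circ}$, which exists since $\mathsf{P}^{\circ}\neq\phi$ by standing hypothesis. For each $\mathsf{x}\in\mathsf{X}$ I would then consider the family
$$\mathcal{V}_{b}(\mathsf{x})=\{\mathsf{B}_{b}(\mathsf{x},\tfrac{1}{n}u):n\in\mathbb{N}\},$$
which is manifestly a countable collection of $\mathsf{T}_{b}$-open neighborhoods of $\mathsf{x}$.

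Next I would verify the local base property. Take any $\mathsf{T}_{b}$-neighborhood $N$ of $\mathsf{x}$. Since $\mathbb{B}_{b}$ is a basis for $\mathsf{T}_{b}$, there is some $v\in\mathsf{P}^{\circ}$ with $\mathsf{x}\in\mathsf{B}_{b}(\mathsf{x},v)\subseteq N$. The decisive step is to produce $n\in\mathbb{N}$ with $\tfrac{1}{n}u\ll v$; once such an $n$ is in hand, for any $\mathsf{y}\in\mathsf{B}_{b}(\mathsf{x},\tfrac{1}{n}u)$ we have $d(\mathsf{y},\mathsf{x})\ll\tfrac{1}{n}u\ll v$, whence $\mathsf{y}\in\mathsf{B}_{b}(\mathsf{x},v)\subseteq N$, as required.

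The main (and essentially only) obstacle is the Archimedean-type property underlying the previous sentence: for every $v\in\mathsf{P}^{\circ}$ there exists $n\in\mathbb{N}$ with $\tfrac{u}{n}\ll v$. I would justify this from the fact that $u/n\to 0$ in the Banach space norm, together with $v-\mathsf{P}^{\circ}$ being an open neighborhood of $0$, which forces $v-\tfrac{u}{n}\in\mathsf{P}^{\circ}$ for all sufficiently large $n$. This is precisely the fact invoked (implicitly) in the proof of Theorem \ref{thm:TH2}, so I would appeal to it in the same spirit here; the remainder of the argument is entirely routine and symmetric to the forward case.
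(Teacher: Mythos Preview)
Your proposal is correct and is exactly the approach the paper has in mind: the paper simply writes ``Omitted as follows from Theorem~\ref{thm:TH2}'', and what you have done is spell out that omitted symmetric argument with $\mathsf{B}_{b}$ in place of $\mathsf{B}_{f}$. The only extra you add is the justification of the Archimedean-type step $\tfrac{u}{n}\ll v$, which the paper uses without comment in Theorem~\ref{thm:TH2} as well.
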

\begin{proof} Omitted as follows from Theorem \ref{thm:TH2}.
\end{proof}

\begin{definition}
$\mathsf{A}\subset \mathsf{X}$ is forward (resp. backward) bounded if there exist $\mathsf x\in \mathsf{X}$ and $u\gg 0$ such that  $\mathsf{A}\subset \mathsf{B}_{f}(\mathsf x,u)$ (resp. $\mathsf{A}\subset \mathsf{B}_{b}(\mathsf x,u))$. 
  
\end{definition}
\begin{definition}
A set $\mathsf{A}\subset \mathsf{X}$ is forward totally bounded(resp. backward totally bounded) if there exist $\mathsf x_{1},\mathsf x_{2},\ldots,\mathsf x_{n} \in \mathsf{A}$ such that $\mathsf{A}\subset \displaystyle\bigcup_{i=1}^{n} \mathsf{B}_{f}(\mathsf x_{i},u),u\in \mathsf{P}^{\circ}$ (resp. $\mathsf{A}\subset \displaystyle\bigcup_{i=1}^{n} \mathsf{B}_{b}(\mathsf x_{i},u),u\in \mathsf{P}^{\circ}).$ 
\end{definition}

\begin{definition}
If each open cover of $\mathsf{A}$ in the forward (resp. backward) topology admits a finite subcover, then the set $\mathsf{A} \subset \mathsf{X}$ is forward (resp. backward) compact.
\end{definition}

\begin{definition}  
If $\bar{\mathsf{A}}$ is forward (backward) compact, then the set $\mathsf{A} \subset \mathsf{X}$ is forward (resp. backward) relatively compact, where $\bar {\mathsf{A}}$ represents the closure of set $A$ in the forward (resp. backward) topology.
\end{definition}
 
\begin{proposition}\label{prop:1} 
A quasi cone metric space  $(\mathsf{X},d)$ is forward compact if $\mathsf{X}$ is forward sequentially compact and forward totally bounded.
\end{proposition}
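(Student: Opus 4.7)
The plan is to mimic the classical metric-space proof that sequential compactness plus total boundedness yield compactness, by establishing a Lebesgue-number type lemma and then using total boundedness to extract a finite subcover. Let $\{U_\alpha\}_{\alpha \in I}$ be an arbitrary forward open cover of $\mathsf{X}$; the goal is to produce a finite subcollection that still covers $\mathsf{X}$.

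First I would prove the following Lebesgue-number statement: there exists $u \in \mathsf{P}^{\circ}$ such that for every $\mathsf{x} \in \mathsf{X}$, the forward ball $\mathsf{B}_{f}(\mathsf{x}, u)$ is contained in some $U_\alpha$. The argument is by contradiction. Fixing $v \in \mathsf{P}^{\circ}$ and setting $u_n = \frac{1}{n} v$, assume that for each $n$ there is a point $\mathsf{x}_n$ with $\mathsf{B}_{f}(\mathsf{x}_n, u_n)$ contained in no $U_\alpha$. By forward sequential compactness, a subsequence $\{\mathsf{x}_{n_k}\}$ forward converges to some $\mathsf{x} \in U_\beta$, and forward openness of $U_\beta$ provides $w \in \mathsf{P}^{\circ}$ with $\mathsf{B}_{f}(\mathsf{x}, w) \subseteq U_\beta$. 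For $k$ large, $d(\mathsf{x}, \mathsf{x}_{n_k}) \ll w/2$ and $u_{n_k} \ll w/2$, whence for every $\mathsf{y} \in \mathsf{B}_{f}(\mathsf{x}_{n_k}, u_{n_k})$ one has $d(\mathsf{x}, \mathsf{y}) \leq d(\mathsf{x}, \mathsf{x}_{n_k}) + d(\mathsf{x}_{n_k}, \mathsf{y}) \ll w$, so $\mathsf{B}_{f}(\mathsf{x}_{n_k}, u_{n_k}) \subseteq U_\beta$, a contradiction. Once such a $u$ is in hand, forward total boundedness supplies finitely many points $\mathsf{x}_1, \ldots, \mathsf{x}_n$ with $\mathsf{X} = \bigcup_{i=1}^n \mathsf{B}_{f}(\mathsf{x}_i, u)$, and choosing $U_{\alpha_i} \supseteq \mathsf{B}_{f}(\mathsf{x}_i, u)$ yields the desired finite subcover.

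The main obstacle is technical rather than conceptual: one must handle the cone-order arithmetic carefully. Specifically, I would rely on the standard fact that $a \leq b \ll c$ implies $a \ll c$ (which follows from $\mathsf{P}^{\circ} + \mathsf{P} \subseteq \mathsf{P}^{\circ}$) in order to upgrade the triangle-inequality estimate $d(\mathsf{x}, \mathsf{y}) \leq d(\mathsf{x}, \mathsf{x}_{n_k}) + d(\mathsf{x}_{n_k}, \mathsf{y})$ into the strict interior bound $d(\mathsf{x}, \mathsf{y}) \ll w$. I would also need $\frac{1}{n}v \ll w/2$ for $n$ large, which holds because $\mathsf{P}^{\circ}$ is open in $\mathsf{E}$ and $v/n \to 0$ in norm, so $w/2 - v/n$ eventually lies in $\mathsf{P}^{\circ}$. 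Finally, because $d$ is not symmetric, the triangle inequality must be applied in the order $d(\mathsf{x}, \mathsf{y}) \leq d(\mathsf{x}, \mathsf{x}_{n_k}) + d(\mathsf{x}_{n_k}, \mathsf{y})$, which is the forward-compatible version and matches the definition of forward convergence invoked when controlling $d(\mathsf{x}, \mathsf{x}_{n_k})$.
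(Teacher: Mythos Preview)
Your proposal is correct and follows essentially the same route as the paper: both arguments establish a Lebesgue-number statement by contradiction (building a sequence of ``bad'' centres with shrinking radii $\tfrac{1}{n}v$, extracting a forward-convergent subsequence via sequential compactness, and deriving a contradiction from the triangle inequality), and then invoke forward total boundedness to pass to a finite subcover. Your treatment of the cone-order arithmetic is in fact more careful than the paper's, but the strategy is identical.
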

\begin{proof}
Assume that in the forward topology, $\mathcal{S}$ is an open cover for $\mathsf{X}$ in forward topology $\mathsf{T_{f}}$ on the contrary that any $w\gg 0$ there exists $v< w$ such that no element of $\mathcal{S}$ contains the ball with radius $v.$ Particularly, no element of $\mathcal{S}$ contains the ball, $\mathsf{B}_{n}=\mathsf{B}_{f}(\mathsf x_{n},r),$ where $r<\frac{1}{n},n\in \mathbb{N}.$ Since $\mathsf{X}$ is forward sequentially compact. Then there is a point $b$(say) such that the subsequence $\{\mathsf x_{n_{i}}\}$ forward converges to $b.$ Clearly, there is $\mathsf{S} \subset \mathcal{S}$ such that $b\in \mathsf{S}.$ Now, let $u \gg 0$ such that $\mathsf{B}_{f}(b,u)\subset \mathsf{S} $ as  $\mathsf{S}$ is open. Also, if we take $i$ sufficiently large such that $\frac{1}{n_{i}} < \frac{u}{2},$ then $\mathsf{B}_{n_{i}} \subset \mathsf{B}_{f}(b,u).$ If we also take $i$ sufficiently large such that $d(b,\mathsf x_{n_{i}}) \ll \frac{u}{2},$ then $\mathsf{B}_{n_{i}}\subset \mathsf{B}_{f}(b,u).$ Therefore, $\mathsf{S}$ contains $\mathsf{B}_{n_{i}}$ which is a contradiction. We conclude that there is an element of $\mathcal{S}$ containing the ball with radius $v,$ where  $v<w$ for any $w\gg 0.$ Choose $u=\frac{w}{2}.$ Since $\mathsf{X}$ is totally bounded we can cover $\mathsf{X}$ by forward open balls. Each of these balls are contained in an element of   $\mathcal{S}$ as these balls have radius $u<w.$ Hence  we obtain a finite subcollection of $\mathcal{S}$ that covers $\mathsf{X}$ by choosing one element of $\mathcal{S}$ corresponding to each of these forward $u$-balls. This proves the theorem.
\end{proof}

\begin{proposition}\label{prop:2}
A quasi cone metric space  $(\mathsf{X},d)$ is forward totally bounded if $\mathsf{X}$ is forward sequentially compact and forward convergence implies backward convergence.
\end{proposition}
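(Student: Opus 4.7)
The plan is to argue by contradiction in the familiar metric-space style, adapting the ``spread-out sequence'' construction to the quasi cone setting. Assume $\mathsf{X}$ is not forward totally bounded. Then there exists some $u\in\mathsf{P}^{\circ}$ such that no finite family of forward $u$-balls covers $\mathsf{X}$. First I would build inductively a sequence $\{\mathsf{x}_n\}\subset\mathsf{X}$: pick $\mathsf{x}_1$ arbitrarily, and having chosen $\mathsf{x}_1,\dots,\mathsf{x}_n$, use the non-coverability hypothesis to pick
$$\mathsf{x}_{n+1}\in \mathsf{X}\setminus\bigcup_{i=1}^{n}\mathsf{B}_f(\mathsf{x}_i,u).$$
By construction $d(\mathsf{x}_i,\mathsf{x}_j)\not\ll u$ whenever $i<j$.

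Next I would invoke forward sequential compactness to extract a subsequence $\{\mathsf{x}_{n_k}\}$ that forward converges to some $\mathsf{x}\in\mathsf{X}$. The key leverage of the second hypothesis is that this subsequence also backward converges to $\mathsf{x}$. So, given any $v\in\mathsf{P}^{\circ}$, there exists $K$ with $d(\mathsf{x},\mathsf{x}_{n_k})\ll v$ and $d(\mathsf{x}_{n_k},\mathsf{x})\ll v$ for every $k\ge K$.

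Now I would choose $v=\tfrac{1}{2}u$, which lies in $\mathsf{P}^{\circ}$ because $\mathsf{P}^{\circ}$ is invariant under multiplication by positive scalars. For $K\le i<j$ the triangle inequality gives
$$d(\mathsf{x}_{n_i},\mathsf{x}_{n_j})\le d(\mathsf{x}_{n_i},\mathsf{x})+d(\mathsf{x},\mathsf{x}_{n_j})\ll v+v=u,$$
where I use the standard cone-ordering fact that $a\le b$ and $b\ll c$ imply $a\ll c$ (since $\mathsf{P}+\mathsf{P}^{\circ}\subseteq\mathsf{P}^{\circ}$). Because $n_i<n_j$, this contradicts $d(\mathsf{x}_{n_i},\mathsf{x}_{n_j})\not\ll u$ from the construction, completing the argument.

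I do not anticipate a serious obstacle: the construction of the sequence is entirely straightforward, and the only subtle points are the two cone-order facts I just mentioned, namely that $\tfrac{1}{2}u\in\mathsf{P}^{\circ}$ and that $\ll$ is preserved under the triangle inequality. The essential role played by the hypothesis ``forward convergence implies backward convergence'' is to produce the bound $d(\mathsf{x}_{n_i},\mathsf{x})\ll v$ with the arguments of $d$ in the correct order for the triangle inequality---without this, the asymmetry of $d$ would block the above combination of the two terms.
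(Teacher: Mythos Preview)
Your argument is correct and follows essentially the same route as the paper: assume failure of forward total boundedness, build the spread-out sequence inductively, extract a forward convergent subsequence, upgrade it to backward convergence via the hypothesis, and obtain a contradiction through the triangle inequality. Your write-up is in fact a bit more explicit than the paper's, which simply asserts that the resulting subsequence is ``forward Cauchy'' and stops there, whereas you spell out the $\tfrac{u}{2}+\tfrac{u}{2}$ estimate that contradicts $d(\mathsf{x}_{n_i},\mathsf{x}_{n_j})\not\ll u$.
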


\begin{proof}
Suppose on the contrary that $\mathsf{X}$ is not forward totally bounded. Then there exists $u\gg 0$ such that finitely many forward $u$-balls cannot cover $\mathsf{X}.$ Let $\mathsf x_{1} \in \mathsf{X}.$ Since $\mathsf{B}_{f}(\mathsf x_{1},u)\neq \mathsf{X}.$ Then we can choose $\mathsf \mathsf x_{2}$ in complement of $\mathsf{B}_{f}(\mathsf x_{1},u)$ in $\mathsf{X}.$ Proceeding inductively, we can choose $x_{n+1}$ in complement of $\mathsf{B}_{f}(\mathsf x_{1},u) \cup \ldots \cup  \mathsf{B}_{f}(\mathsf x_{n},u),$ in $\mathsf{X}.$ Then $d(\mathsf x_{i},\mathsf x_{n+1})\geq u$ for $i=1,2,\ldots n.$ Since  $\mathsf{X}$ is forward sequentially compact, the sequence $\{\mathsf x_{n}\}$ has a forward convergent subsequence. Then by hypothesis this subsequence is backward convergent. Hence the subsequence is forward  Cauchy, which is a contradiction. Hence the proof.
\end{proof}

\begin{lemma}\label{lemma:1}
A  set $\mathsf{K}$ in a quasi cone metric space $(\mathsf{X},d)$ is forward sequentially compact if $\mathsf{K}$ is forward compact.
\end{lemma}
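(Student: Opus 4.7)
The plan is to run the standard topological argument that first-countable compact spaces are sequentially compact, transposed to the forward setting. The two ingredients I would use are Theorem \ref{thm:TH2} (the forward topology is first countable) and the forward compactness hypothesis on $\mathsf{K}$.

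Fix a sequence $\{\mathsf{x}_{n}\}$ in $\mathsf{K}$. I would split on whether some point of $\mathsf{K}$ is a forward cluster point of the sequence. In the favorable case, suppose there exists $\mathsf{x} \in \mathsf{K}$ such that every forward open neighborhood of $\mathsf{x}$ contains $\mathsf{x}_{n}$ for infinitely many $n$. By Theorem \ref{thm:TH2}, fixing any $u \in \mathsf{P}^{\circ}$, the family $\{\mathsf{B}_{f}(\mathsf{x}, \frac{1}{k} u)\}_{k \in \mathbb{N}}$ is a countable neighborhood base at $\mathsf{x}$. I would then recursively pick indices $n_{1} < n_{2} < \cdots$ with $\mathsf{x}_{n_{k}} \in \mathsf{B}_{f}(\mathsf{x}, \frac{1}{k} u)$, so that $d(\mathsf{x}, \mathsf{x}_{n_{k}}) \ll \frac{1}{k} u$. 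For an arbitrary $v \in \mathsf{P}^{\circ}$, the same fact used in the proof of Theorem \ref{thm:TH2} produces $N$ with $\frac{1}{N} u \ll v$, and then $d(\mathsf{x}, \mathsf{x}_{n_{k}}) \ll v$ for all $k \geq N$, so $\{\mathsf{x}_{n_{k}}\}$ is forward convergent to $\mathsf{x} \in \mathsf{K}$.

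In the remaining case no such $\mathsf{x}$ exists, so for each $\mathsf{x} \in \mathsf{K}$ I select a forward open neighborhood $U_{\mathsf{x}}$ for which $\{n \in \mathbb{N} : \mathsf{x}_{n} \in U_{\mathsf{x}}\}$ is finite. Then $\{U_{\mathsf{x}}\}_{\mathsf{x} \in \mathsf{K}}$ is a forward open cover of $\mathsf{K}$, and forward compactness extracts a finite subcover $U_{\mathsf{y}_{1}}, \ldots, U_{\mathsf{y}_{m}}$. This forces $\{n : \mathsf{x}_{n} \in \mathsf{K}\} = \bigcup_{i=1}^{m} \{n : \mathsf{x}_{n} \in U_{\mathsf{y}_{i}}\}$ to be finite, contradicting $\{\mathsf{x}_{n}\} \subset \mathsf{K}$. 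Hence only the favorable case can occur, which delivers the required forward-convergent subsequence with limit in $\mathsf{K}$.

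The main obstacle is the verification in the favorable case that the extracted subsequence truly satisfies the paper's definition of forward convergence, namely that $d(\mathsf{x}, \mathsf{x}_{n_{k}}) \ll v$ eventually for \emph{every} $v \in \mathsf{P}^{\circ}$, not just for scalar multiples of the fixed base vector $u$. This reduces to the cone-theoretic fact, already used inside Theorem \ref{thm:TH2}, that any $v \in \mathsf{P}^{\circ}$ eventually dominates $\frac{1}{k} u$; no extra ingredient beyond that is needed to close the argument.
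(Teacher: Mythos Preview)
Your argument is correct and follows essentially the same route as the paper: both identify that if no point of $\mathsf{K}$ is a forward cluster point of the sequence, then every point admits a forward ball containing only finitely many terms, and forward compactness then yields a finite subcover and a contradiction. The only difference is presentational---you make the favorable case explicit by invoking Theorem~\ref{thm:TH2} to build the convergent subsequence from a countable base, whereas the paper collapses this into the claim that each $\mathsf{B}_{f}(\mathsf{x},u_{\mathsf{x}})$ contains only finitely many terms and proceeds directly to the contradiction.
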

\begin{proof}
Suppose that $\mathsf{K}$ is not forward sequentially compact and $\{\mathsf x_{n}\}$ be a sequence in $\mathsf{K}.$ Then there exists a non forward convergent subsequence of $\{\mathsf {x_{n}}\}.$ Let $\mathsf x\in \mathsf{X}.$ Clearly, only finite number of terms of sequence $\{\mathsf x_{n}\}$ are contained in $\mathsf{B}_{f}(\mathsf x,u_{\mathsf x}).$ Also the set $\mathcal{B}=\{\mathsf{B}_{f}(\mathsf x,u_{\mathsf x}):\mathsf x\in \mathsf{X},u_{\mathsf x}\gg0 \}$ forms an open covering of $\mathsf{X}.$ Then for $n\in \mathbb{N},$ $\mathcal{B}_{1}=\{\mathsf{B}_{f}(\mathsf x_{i},u_{\mathsf x_{i}}):\mathsf x\in \mathsf{X},i=1,2,\ldots,n\}$ is a finite subcover of $\mathsf{X},$ a contradiction as only finite number of terms of sequence $\{\mathsf x_{n}\}$ are contained in $\mathsf{B}_{f}(\mathsf x_{i},u_{\mathsf x_{i}}).$ Hence the proof.      
\end{proof}

\begin{lemma}\cite{YHC}\label{lemma:2}
A quasi cone metric space $(\mathsf{X},d)$ is forward complete if every forward Cauchy sequence has a forward convergent subsequence.
\end{lemma}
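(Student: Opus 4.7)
The plan is to follow the classical pattern for this kind of statement: pick an arbitrary forward Cauchy sequence, use the hypothesis to produce a forward convergent subsequence, and then upgrade the convergence of the subsequence to convergence of the full sequence via the triangle inequality in $\mathsf{E}$.

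Concretely, let $\{\mathsf x_n\}$ be a forward Cauchy sequence in $(\mathsf X,d)$. By the hypothesis of the lemma, there is a subsequence $\{\mathsf x_{n_k}\}$ and a point $\mathsf x\in \mathsf X$ such that $\mathsf x_{n_k}$ is forward convergent to $\mathsf x$. I would like to show that $\mathsf x_n$ itself is forward convergent to $\mathsf x$. Fix an arbitrary $u\in \mathsf{P}^{\circ}$. Since $\mathsf{P}^{\circ}$ is closed under multiplication by positive scalars, $u/2\in \mathsf{P}^{\circ}$. Using the forward Cauchy property, choose $N_1\in \mathbb{N}$ such that $d(\mathsf x_m,\mathsf x_n)\ll u/2$ whenever $m,n\geq N_1$ (in whichever order the definition prescribes the two arguments). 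Using the forward convergence of the subsequence, choose $K\in \mathbb{N}$ such that $d(\mathsf x,\mathsf x_{n_k})\ll u/2$ for all $k\geq K$, and pick one index $k_0\geq K$ with $n_{k_0}\geq N_1$.

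Then for every $n\geq N_1$, the axiom (iii) of Definition 2 (the triangle inequality) yields
\[
d(\mathsf x,\mathsf x_n)\leq d(\mathsf x,\mathsf x_{n_{k_0}})+d(\mathsf x_{n_{k_0}},\mathsf x_n)\ll \tfrac{u}{2}+\tfrac{u}{2}=u,
\]
where the two $\ll$ estimates are combined using the fact that $\mathsf{P}^{\circ}+\mathsf{P}^{\circ}\subseteq \mathsf{P}^{\circ}$ and $\mathsf{P}+\mathsf{P}^{\circ}\subseteq \mathsf{P}^{\circ}$. Since $u\in \mathsf{P}^{\circ}$ was arbitrary, this says exactly that $\mathsf x_n$ is forward convergent to $\mathsf x$, and hence $(\mathsf X,d)$ is forward complete.

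The main obstacle I anticipate is bookkeeping around the asymmetry of $d$: a quasi cone metric is not symmetric, so the definition of a forward Cauchy sequence picks a specific order of the two arguments in $d(\cdot,\cdot)$, and the same must hold for the forward convergence $d(\mathsf x,\mathsf x_{n_k})\ll u/2$ used above. The triangle inequality is only useful here if the two estimates can be chained as $d(\mathsf x,\mathsf x_{n_{k_0}})+d(\mathsf x_{n_{k_0}},\mathsf x_n)$; in particular the Cauchy condition must give control of $d(\mathsf x_{n_{k_0}},\mathsf x_n)$ with $n_{k_0}$ in the first slot. Assuming this is the convention used in the cited reference \cite{YHC}, the argument proceeds as above; otherwise one must instead work with $d(\mathsf x_n,\mathsf x_{n_{k_0}})$ and use the symmetric forward convergence estimate, which would require a mild compatibility assumption between the Cauchy and convergence definitions. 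Everything else is routine.
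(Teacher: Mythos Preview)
Your argument is correct and is exactly the standard one for this type of statement; the only minor adjustment is that, with the one-sided forward Cauchy condition, the final estimate should be stated for all $n\geq n_{k_0}$ rather than all $n\geq N_1$, which you effectively already flag in your closing paragraph. Note that the paper does not actually supply a proof of this lemma: it is quoted from \cite{YHC} and used as a black box, so there is no in-paper argument to compare against, but your approach is precisely the one that reference uses.
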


\begin{proposition}\label{prop:3}
A quasi cone metric space $(\mathsf{X},d),$ is forward compact if and only if  $\mathsf{X}$ is forward totally bounded and forward complete.
\end{proposition}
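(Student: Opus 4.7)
The plan is to route through forward sequential compactness, using three tools already in hand: Proposition~\ref{prop:1} (forward sequentially compact plus forward totally bounded implies forward compact), Lemma~\ref{lemma:1} (forward compact implies forward sequentially compact), and Lemma~\ref{lemma:2} (forward completeness is characterised by every forward Cauchy sequence having a forward convergent subsequence).

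For the ``only if'' direction, assume $(\mathsf{X},d)$ is forward compact. Fix any $u\gg 0$; the family $\{\mathsf{B}_{f}(\mathsf{x},u):\mathsf{x}\in\mathsf{X}\}$ is a forward open cover of $\mathsf{X}$, hence admits a finite subcover, which is exactly the assertion of forward total boundedness. For forward completeness, Lemma~\ref{lemma:1} yields forward sequential compactness, so every sequence — and in particular every forward Cauchy sequence — has a forward convergent subsequence, whence Lemma~\ref{lemma:2} delivers forward completeness.

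For the ``if'' direction, assume $\mathsf{X}$ is forward totally bounded and forward complete. By Proposition~\ref{prop:1} it is enough to establish forward sequential compactness. Given a sequence $\{\mathsf{x}_n\}$ and a fixed $u\gg 0$, the plan is a standard nested extraction: at stage $k$ cover $\mathsf{X}$ by finitely many forward balls of radius $\tfrac{u}{2^{k}}$, select one that captures infinitely many terms of the current subsequence, and iterate. Diagonalising produces a subsequence $\{\mathsf{x}_{n_k}\}$ together with centres $\mathsf{y}_k$ satisfying $d(\mathsf{y}_k,\mathsf{x}_{n_l})\ll \tfrac{u}{2^{k}}$ for every $l\ge k$; Lemma~\ref{lemma:2} then closes the argument the moment this subsequence is recognised as forward Cauchy.

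The main obstacle is precisely that last step. The triangle inequality gives
\[
d(\mathsf{x}_{n_k},\mathsf{x}_{n_l}) \le d(\mathsf{x}_{n_k},\mathsf{y}_k) + d(\mathsf{y}_k,\mathsf{x}_{n_l}),
\]
but only the second summand is controlled by the forward balls; the first runs in the reverse direction and is invisible to forward total boundedness. This is exactly the asymmetry that forced the additional hypothesis ``forward convergence implies backward convergence'' in Proposition~\ref{prop:2}, and I expect an honest proof of the present equivalence to require either that same hypothesis or a notion of forward Cauchy tailored to the asymmetry (for instance, controlling only ordered pairs $n\le m$). With such a patch in place the extraction closes and the argument concludes; without it the construction stalls at this very point, so flagging it is essential to any rigorous write-up.
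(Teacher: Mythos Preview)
Your ``only if'' direction is exactly the paper's. For the ``if'' direction your nested extraction is also the paper's, but you halt at the asymmetry obstruction and suggest an extra hypothesis is required. The paper does not add one; the missing idea is to stop trying to prove that the diagonal subsequence $\{\mathsf{x}_{n_k}\}$ is forward Cauchy and instead prove that the auxiliary sequence of \emph{centres} $\{\mathsf{y}_k\}$ is. In the paper's construction each new centre $\mathsf{y}_{k+1}$ is taken inside the previous ball $\mathsf{B}_k=\mathsf{B}_f(\mathsf{y}_k,1/k)$ (the paper invokes forward total boundedness of $\mathsf{B}_k$ itself, so that the covering centres at stage $k+1$ lie in $\mathsf{B}_k$), giving $d(\mathsf{y}_k,\mathsf{y}_{k+1})\ll 1/k$ with the arguments already in the correct forward order. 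Telescoping makes $\{\mathsf{y}_k\}$ forward Cauchy, completeness supplies a forward limit $\mathsf{y}$, and then
\[
d(\mathsf{y},\mathsf{x}_{n_k}) \le d(\mathsf{y},\mathsf{y}_k)+d(\mathsf{y}_k,\mathsf{x}_{n_k})
\]
has both summands forward-controlled: the first by $\mathsf{y}_k\to\mathsf{y}$, the second because $\mathsf{x}_{n_k}\in\mathsf{B}_k$. Thus $\{\mathsf{x}_{n_k}\}$ forward converges to $\mathsf{y}$ directly, and your problematic term $d(\mathsf{x}_{n_k},\mathsf{y}_k)$ is never needed.

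You may fairly object that the paper is loose here too: the radii $1/k$ are not summable (your $u/2^k$ repair this), and the assertion that each $\mathsf{B}_k$ is itself forward totally bounded with centres inside $\mathsf{B}_k$ is stated rather than proved --- and in the asymmetric setting, passing total boundedness to a subset with centres in that subset conceals exactly the same reversal you flagged. So your instinct that something delicate is being glossed over is sound; the point is only that the paper's intended route is the detour through the centres, not an added ``forward implies backward'' hypothesis, and that detour is the device your proposal lacks.
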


\begin{proof}
Suppose $\mathsf{X}$ is forward compact. Then using Lemma {\ref{lemma:1}}, $\mathsf{X}$ is forward sequentially compact. This implies every forward Cauchy sequence has a subsequence that converges forward. By Lemma {\ref{lemma:2}}, $\mathsf{X}$ is forward complete. Clearly, $\mathsf{X}$ is forward totally bounded as every open covering of $\mathsf{X}$ by forward $u$-balls admits a finite subcover.\\
Conversely, let $\mathsf{X}$ be forward complete and forward totally bounded. To prove that $\mathsf{X}$ is forward compact we shall prove that $\mathsf{X}$ is forward sequentially compact. Consider a sequence $\{\mathsf x_{n}\}$ in $\mathsf{X}.$ Let $\mathcal{B}=\{B_{f}(\mathsf x,1):\mathsf x\in \mathsf{X}\}$ be an open covering of $\mathsf{X}.$ Then for infintely many values of $n, \{\mathsf x_{n}\}$ is contained in one of these balls, say $\mathsf{B}_{1}.$ Let $\mathcal{J}_{1}=\{n\in \mathbb{N}:\mathsf x_{n}\in \mathsf{B}_{1}\}.$ Then $\mathsf{B_{1}}$ can be covered by finite number of balls of radius $\frac{1}{2}$ as $\mathsf{B_{1}}$ is forward totally bounded. For infinitely many values of $n$ in $\mathcal{J}_{1},$ $\{\mathsf x_{n}\}$ is contained in one of the balls, $\mathsf{B_{2}}$(say). Proceeding in this manner we obtain a sequence of balls  $\mathsf{B_{1}}\supset \mathsf{B_{2}} \supset \mathsf{B_{3}}\ldots$ and corresponding sets $\mathcal{J}_{2}=\{n:n\in \mathcal{J}_{1},\mathsf x_{n}\in \mathsf{B_{2}}\},~ \mathcal{J}_{3}=\{n:n\in \mathcal{J}_{2},x_{n}\in \mathsf{B}_{3}\}.$ In general $\mathcal{J}_{m+1}=\{n:n\in \mathcal{J}_{m},\mathsf x_{n}\in \mathsf{B_{m+1}}\}.$ Let $n_{1}\in \mathcal{J}_{1}.$ For $n_{m},$ let $n_{m+1}\in \mathcal{J}_{m+1}$ such that $n_{m}< n_{m+1}.$ Then for each $k\in \mathbb{N},\mathsf  x_{n_{k}}\in \mathsf{B_{k}}.$ Consider the ball $\mathsf{B_{k}}=\mathsf{B}(\mathsf y_{k},\frac{1}{k}).$ Since $d(\mathsf y_{k},\mathsf y_{k+1})\ll \frac{1}{k}$ tends to $0$ as $k$ tends to $\infty.$ Then, if $k\leq m$ we have $d(\mathsf y_{k}, \mathsf y_{m})$ tends to $0$ as $k$ tends to $\infty.$ From this we deduce that $\{\mathsf x_{m}\}$ is a forward Cauchy sequence. Now, since $\mathsf{X}$ is forward complete.Then there exists $\mathsf y$ in $\mathsf{X}$ such that $\{y_{m}\}$ is forward convergent to $y.$ Lastly, $d(y,x_{n_{k}})\leq d(y,y_{k})+d(y_{k},\mathsf x_{n_{k}}).$ Then $d(\mathsf y,\mathsf x_{n_{k}})\to 0$ as $k\to \infty.$ Therefore, $\{\mathsf x_{n_{k}}\}$ is a forward convergent subsequence of $\{\mathsf x_{n}\}$ which proves that $X$ is forward sequentially compact.
\end{proof}

\begin{proposition}
Suppose that in a quasi cone metric space $(\mathsf{X},d)$ forward convergence implies backward convergence. Then every forward compact set is backward totally bounded, so it is backward bounded.
\end{proposition}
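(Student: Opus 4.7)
The plan is to mimic the contradiction construction of Proposition~\ref{prop:2}, but with backward $u$-balls in place of forward ones, and then to read off backward boundedness as a routine consequence of backward total boundedness.

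Let $\mathsf{K}\subset \mathsf{X}$ be forward compact and fix an arbitrary $u\in \mathsf{P}^{\circ}$. I assume toward contradiction that no finitely many backward $u$-balls with centers in $\mathsf{K}$ cover $\mathsf{K}$. Choosing $\mathsf{x}_1\in \mathsf{K}$, the failure of $\{\mathsf{B}_b(\mathsf{x}_1,u)\}$ as a cover lets me pick $\mathsf{x}_2\in \mathsf{K}\setminus \mathsf{B}_b(\mathsf{x}_1,u)$; proceeding inductively I obtain a sequence $\{\mathsf{x}_n\}\subset \mathsf{K}$ with $\mathsf{x}_{n+1}\notin \mathsf{B}_b(\mathsf{x}_1,u)\cup\cdots\cup \mathsf{B}_b(\mathsf{x}_n,u)$, i.e.\ $d(\mathsf{x}_{n+1},\mathsf{x}_i)\not\ll u$ whenever $i\leq n$. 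By Lemma~\ref{lemma:1} the set $\mathsf{K}$ is forward sequentially compact, so there is a subsequence $\{\mathsf{x}_{n_k}\}$ and a point $\mathsf{y}\in \mathsf{K}$ with $\mathsf{x}_{n_k}\to \mathsf{y}$ forward; by the standing hypothesis the same subsequence converges backward to $\mathsf{y}$.

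The key step is to combine both modes of convergence via the triangle inequality. For any $v\gg 0$ we eventually have both $d(\mathsf{y},\mathsf{x}_{n_k})\ll v$ and $d(\mathsf{x}_{n_k},\mathsf{y})\ll v$, so taking $v=\tfrac{u}{2}$ gives, for indices $j<k$ both sufficiently large,
$$d(\mathsf{x}_{n_k},\mathsf{x}_{n_j})\leq d(\mathsf{x}_{n_k},\mathsf{y})+d(\mathsf{y},\mathsf{x}_{n_j})\ll \tfrac{u}{2}+\tfrac{u}{2}=u,$$
using the standard cone facts $\mathsf{P}^{\circ}+\mathsf{P}^{\circ}\subseteq \mathsf{P}^{\circ}$ and ``$a\leq b\ll c$ implies $a\ll c$''. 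This says $\mathsf{x}_{n_k}\in \mathsf{B}_b(\mathsf{x}_{n_j},u)$, contradicting the construction. Hence $\mathsf{K}$ is backward totally bounded.

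To deduce backward boundedness, fix $u\gg 0$ and pick $\mathsf{x}_1,\ldots,\mathsf{x}_n\in \mathsf{K}$ with $\mathsf{K}\subset \bigcup_{i=1}^n \mathsf{B}_b(\mathsf{x}_i,u)$. Setting $v:=2u+\sum_{i=1}^{n} d(\mathsf{x}_i,\mathsf{x}_1)$, which lies in $\mathsf{P}^{\circ}$ since it is the sum of an interior element and elements of $\mathsf{P}$, a one-line triangle-inequality check shows $d(\mathsf{y},\mathsf{x}_1)\ll v$ for every $\mathsf{y}\in \mathsf{K}$, so $\mathsf{K}\subset \mathsf{B}_b(\mathsf{x}_1,v)$. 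The main obstacle is purely technical: since $\ll$ is not a linear order, one must repeatedly invoke the interior properties of $\mathsf{P}$ rather than do ordinary arithmetic, and the indices $j,k$ in the contradiction step must be chosen so that \emph{both} the forward and backward tails of the subsequence are simultaneously $\ll \tfrac{u}{2}$.
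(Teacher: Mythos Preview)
Your proposal is correct and follows essentially the same route as the paper: both argue by contradiction, build a sequence avoiding successive backward $u$-balls, extract a forward (hence backward) convergent subsequence via Lemma~\ref{lemma:1}, and then combine the two modes of convergence through the triangle inequality to force $d(\mathsf{x}_{n_k},\mathsf{x}_{n_j})\ll u$ against the construction. Your version is in fact slightly more careful about the distinction between ``$\not\ll u$'' and ``$\geq u$'' in the cone order, and you also spell out the passage from backward total boundedness to backward boundedness, which the paper merely asserts.
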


 \begin{proof}
Suppose that $\mathsf{K}$ is a non-empty forward compact set in  $\mathsf{X}$ and for $y\in \mathsf{K},u \gg 0,$ finitely many backward open balls $\mathsf{B}_{b}(\mathsf y,u)$ cannot cover $\mathsf{K}.$ Let $\mathsf y_{1}\in \mathsf{K}.$ Since $\mathsf{K}$ is not contained in $\mathsf{B}_{b}(\mathsf y_{1},u).$ Then there exist $\mathsf y_{2}$ in complement of $\mathsf{B}_{b}(\mathsf y_{1},u)$ in $\mathsf{K}.$ By induction, we see that $\mathsf{K}$ is not contained in $\displaystyle \bigcup_{j=1}^{n} \mathsf{B}_{b}(\mathsf y_{j},u).$ Then we can choose $\mathsf y_{m+1}$ in complement of $\displaystyle\bigcup_{j=1}^{n} \mathsf{B}_{b}(\mathsf y_{j},u)$ in  $\mathsf{K}.$ Hence there exists a sequence $\{\mathsf y_{m}\}$ in $\mathsf{K}$ so that 
\begin{equation}\label{eq:1}
d(\mathsf y_{m+1}, \mathsf y_{j})\geq u, \forall m\in \mathbb{N},\forall j=1,2,\ldots,m.
\end{equation}
Then using Lemma \ref{lemma:1}, we obtain a subsequence  $\{\mathsf y_{m_{k}}\} $ of $\mathsf y_{m}$ such that $\{\mathsf y_{m_{k}}\}$ is forward convergent to $\mathsf y$ in $\mathsf{K}.$ From this we deduce that $\{\mathsf y_{m_{l}}\}$ is backward convergent to $\mathsf y.$ Hence there is a natural number $l_{0}$ such that $d(\mathsf y, \mathsf y_{m_{l}})\ll \frac{u}{2}$ and $d(\mathsf y_{m_{l}},\mathsf y)\ll \frac{u}{2}$ for all $l\geq l_{0}.$ As a result, using (\ref{eq:1}),we obtain
\begin{equation*} 
u\leq d(\mathsf y_{n_{l}+1},\mathsf y_{n_{l}})\leq d(\mathsf y_{n_{l}+1},\mathsf y) + d(\mathsf y,\mathsf y_{n_{l}})\ll u 
\end{equation*}
which does not hold. Hence the proof.
\end{proof}
\begin{definition}
Let $(\mathsf{X},d_{\mathsf{X}})$ and $(\mathsf{Y},d_{\mathsf{Y}})$ be two quasi cone metric spaces. A set of functions $\mathcal{S}$ from $\mathsf{X}$ to $\mathsf{Y}$ is said to be forward(resp. backward) equicontinuous if for every $u_{1}\gg 0$ and for every $\mathsf x\in \mathsf{X},$ there exists $u_{2}\gg 0$ such that for every $f\in \mathsf{S}$ and $\mathsf y\in \mathsf{Y}$ we have $d_{\mathsf{Y}}(f(\mathsf x),f(\mathsf y))\ll u_{2}$ (resp. $d_{\mathsf{Y}}(f(\mathsf y),f(\mathsf x))\ll u_{2})$ whenever $d_{\mathsf{X}}(\mathsf x,\mathsf y)\ll u_{1}.$
\end{definition}
\begin{lemma}\cite{YHC}\label{lemma:3}
Suppose that a quasi cone metric space $(\mathsf{X},d)$ is forward sequentially compact. Then backward convergence implies forward convergence.
\end{lemma}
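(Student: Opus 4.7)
The plan is a proof by contradiction that exploits forward sequential compactness twice: once to extract a forward convergent subsequence, and once (implicitly, via the triangle inequality) to identify its limit. Let $\{x_n\}$ be a sequence in $\mathsf{X}$ that is backward convergent to some $x\in \mathsf{X}$, and assume toward a contradiction that $\{x_n\}$ is not forward convergent to $x$. Unpacking the definition, this yields some $u_0\in \mathsf{P}^\circ$ such that the set $J=\{n\in \mathbb{N}:d(x,x_n)\not\ll u_0\}$ is infinite, so I can extract a subsequence $\{x_{n_k}\}$ with $d(x,x_{n_k})\not\ll u_0$ for every $k$.

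Next I apply forward sequential compactness to this subsequence to obtain a further subsequence $\{x_{n_{k_j}}\}$ that is forward convergent to some $y\in \mathsf{X}$. I claim $y=x$. Since $\{x_n\}$ is backward convergent to $x$, so is $\{x_{n_{k_j}}\}$, which means $d(x_{n_{k_j}},x)\ll v$ eventually for every $v\gg 0$; likewise the forward convergence to $y$ gives $d(y,x_{n_{k_j}})\ll v$ eventually. By the triangle inequality,
\[
d(y,x)\leq d(y,x_{n_{k_j}})+d(x_{n_{k_j}},x)\ll 2v
\]
for every $v\gg 0$. A standard argument (the same one used at the end of the proof of Theorem \ref{thm:Th1}: take $v=w/n$ for $w\in \mathsf{P}^\circ$ fixed and $n\to\infty$, and use that $\mathsf{P}$ is closed) then forces $d(y,x)=0$, so $y=x$.

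But then $\{x_{n_{k_j}}\}$ forward converges to $x$, which means $d(x,x_{n_{k_j}})\ll u_0$ for all sufficiently large $j$, contradicting the defining property $d(x,x_{n_{k_j}})\not\ll u_0$ that this subsequence inherited from $\{x_{n_k}\}$. This contradiction shows $\{x_n\}$ must in fact be forward convergent to $x$.

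The only delicate step is the passage from $d(y,x)\ll 2v$ for every $v\in \mathsf{P}^\circ$ to $d(y,x)=0$, since unlike in a metric space we cannot simply let $v\to 0$. The trick, borrowed verbatim from Theorem \ref{thm:Th1}, is to fix a single $w\in \mathsf{P}^\circ$, observe that $w/n - d(y,x)\in \mathsf{P}$ for every $n$, and take $n\to\infty$ inside the closed cone $\mathsf{P}$ to conclude $-d(y,x)\in \mathsf{P}$, which combined with $d(y,x)\in \mathsf{P}$ forces $d(y,x)=0$. Once this technicality is in place the rest of the argument is a routine subsequence-of-a-subsequence extraction.
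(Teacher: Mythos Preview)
The paper does not supply its own proof of this lemma: it is quoted from \cite{YHC} and stated without argument, so there is nothing in the paper to compare against. Your proof is correct and is the standard subsequence-of-a-subsequence argument one would expect; the identification $y=x$ via $d(y,x)\le d(y,x_{n_{k_j}})+d(x_{n_{k_j}},x)$ uses exactly the right orientation of the quasi-metric (forward convergence to $y$ controls the first summand, backward convergence to $x$ controls the second), and your handling of the passage from $d(y,x)\ll 2v$ for all $v\in\mathsf{P}^\circ$ to $d(y,x)=0$ via the closedness of $\mathsf{P}$ is sound and matches the device used in Theorem~\ref{thm:Th1}.
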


\begin{lemma}\label{lemma:4}
Suppose $(\mathsf{X},d)$ be a quasi cone metric space that is forward compact and forward convergence in $\mathsf{X}$ implies backward convergence. Then the following assertions hold:
\begin{itemize}
\item [(a)] A forward equicontinuous subset, $\mathcal{S}$ of $\mathsf{C}(\mathsf{X},\mathsf{Y})$ is also backward equicontinuous.
\item [(b)] A uniformly backward convergent sequence, $\{f_{n}\}$ in $\mathsf{Y}^{\mathsf{X}}$ is also uniformly forward convergent.
\end{itemize}
\end{lemma}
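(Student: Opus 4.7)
The first step is to leverage the hypotheses to obtain a symmetry on $\mathsf{X}$. Forward compactness of $\mathsf{X}$ together with Lemma \ref{lemma:1} gives forward sequential compactness; Lemma \ref{lemma:3} then yields that backward convergence in $\mathsf{X}$ implies forward convergence; combined with the assumed implication in the opposite direction, forward and backward convergence on $\mathsf{X}$ coincide. This $\mathsf{X}$-symmetry is the workhorse for both parts.

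For part (a), I plan to argue by contradiction. If backward equicontinuity fails, one fixes $u_{0} \gg 0$ and $x_{0} \in \mathsf{X}$ and, by shrinking the source tolerance along $\tfrac{1}{n}v$ for a fixed $v \gg 0$, extracts sequences $\{f_{n}\} \subset \mathcal{S}$ and $\{y_{n}\} \subset \mathsf{X}$ with $d_{\mathsf{X}}(x_{0}, y_{n}) \ll \tfrac{1}{n}v$ yet $d_{\mathsf{Y}}(f_{n}(y_{n}), f_{n}(x_{0})) \not\ll u_{0}$. Then $y_{n} \to x_{0}$ forward in $\mathsf{X}$, and by the preliminary also $d_{\mathsf{X}}(y_{n}, x_{0}) \to 0$. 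Instead of invoking forward equicontinuity at the fixed $x_{0}$, I plan to invoke it at $y_{n}$ with the second point $x_{0}$, so that the conclusion reads $d_{\mathsf{Y}}(f_{n}(y_{n}), f_{n}(x_{0})) \ll u_{0}$, which is the desired contradiction. The main obstacle is that the tolerance supplied by forward equicontinuity depends on the base point $y_{n}$; I would remove this dependence by using forward compactness of $\mathsf{X}$ to upgrade pointwise forward equicontinuity to a uniform version, covering $\mathsf{X}$ by finitely many forward balls and taking the minimum of the finitely many resulting tolerances.

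For part (b), suppose $\{f_{n}\}$ converges uniformly to $f$ in the backward sense, i.e.\ for every $u \gg 0$ eventually $d_{\mathsf{Y}}(f_{n}(x), f(x)) \ll u$ uniformly in $x$. To deduce uniform forward convergence I will again proceed by contradiction, extracting a subsequence $n_{k}$ and points $x_{k} \in \mathsf{X}$ witnessing $d_{\mathsf{Y}}(f(x_{k}), f_{n_{k}}(x_{k})) \not\ll u$ for some fixed $u \gg 0$. Forward sequential compactness of $\mathsf{X}$ produces a further subsequence with $x_{k}$ forward (hence backward) converging to some $x_{*}$; then the triangle inequality in $\mathsf{Y}$, inserting $f(x_{*})$ and $f_{n_{k}}(x_{*})$ between $f(x_{k})$ and $f_{n_{k}}(x_{k})$, controls the side terms via the $\mathsf{X}$-symmetry applied to $d_{\mathsf{X}}(x_{*}, x_{k}) \to 0$ and controls the middle term via the uniform backward hypothesis at $x_{*}$. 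The central difficulty in both parts is the same: the hypothesis symmetrizes only $\mathsf{X}$, so every direction-flip on the $\mathsf{Y}$-side must be engineered by choosing the appropriate base point before applying the available bound, and this is where I expect the argument to be most delicate.
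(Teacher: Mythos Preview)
Your approach diverges from the paper's at the very first step: the paper symmetrizes $\mathsf{Y}$, not $\mathsf{X}$. It argues (tacitly treating $\mathsf{Y}$ as carrying the same compactness and convergence hypotheses---note that its proof opens by applying Lemma~\ref{lemma:1} to $\mathsf{Y}$) that $\mathsf{Y}$ is both forward and backward compact, hence the forward and backward topologies on $\mathsf{Y}$ coincide; both (a) and (b) then follow at once, because in each assertion the forward/backward distinction lives entirely on the $\mathsf{Y}$-side.

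Your route via $\mathsf{X}$-symmetry is viable for (a): the base-point swap, combined with the uniform upgrade of forward equicontinuity you outline, legitimately converts control of $d_{\mathsf{Y}}(f(x_0),f(y))$ into control of $d_{\mathsf{Y}}(f(y),f(x_0))$. This is a genuinely different and more hands-on argument than the paper's topological one.

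For (b), however, there is a real gap. Uniform backward convergence says $\sup_{x}\bar d_{\mathsf{Y}}(f_n(x),f(x))\to 0$; uniform forward convergence asks for $\sup_{x}\bar d_{\mathsf{Y}}(f(x),f_n(x))\to 0$. This is a pure reversal of the $\mathsf{Y}$-arguments with no $\mathsf{X}$-structure to exploit, and your triangle-inequality plan breaks in all three places. The side terms $d_{\mathsf{Y}}(f(x_k),f(x_{*}))$ and $d_{\mathsf{Y}}(f_{n_k}(x_{*}),f_{n_k}(x_k))$ would require continuity of $f$ and of $f_{n_k}$, but elements of $\mathsf{Y}^{\mathsf{X}}$ are arbitrary functions. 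The middle term $d_{\mathsf{Y}}(f(x_{*}),f_{n_k}(x_{*}))$ is precisely the forward quantity you are trying to bound, whereas the backward hypothesis only controls $d_{\mathsf{Y}}(f_{n_k}(x_{*}),f(x_{*}))$. No amount of $\mathsf{X}$-symmetry repairs this; one needs information on the $\mathsf{Y}$-side, which is exactly what the paper's argument (via forward and backward compactness of $\mathsf{Y}$) supplies.
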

\begin{proof}
Clearly, using Lemma \ref{lemma:1}, we see that $\mathsf{Y}$ is forward sequentially compact. Then $\mathsf{Y}$ is backward sequentially  compact as forward convergence implies backward convergence. Now, $\mathsf{Y}$ is backward sequentially compact. Then using Proposition \ref{prop:2}, we see that $\mathsf{Y}$ is backward totally bounded. Since $\mathsf{Y}$ is backward sequentially compact and backward totally bounded then using Proposition \ref{prop:1}, we can conclude that $\mathsf{Y}$ is backward compact. We see that both forward and backward topology are equivalent when the space is both forward and backward compact. As a result we conclude that forward equicontinuity implies backward equicontinuity. The similar result holds in case of uniform convergence.
\end{proof}
\begin{proposition}\label{prop:4}
Let $(\mathsf{X},d_{\mathsf{X}})$ and $(\mathsf{Y},d_{\mathsf{Y}})$ be two quasi cone metric spaces with compact forward topologies and forward convergence in  
$(\mathsf{Y},d_{\mathsf{Y}})$ implies backward convergence. A set $\mathcal{S}\subset \mathsf{C}(\mathsf{X},\mathsf{Y})$ is forward totally bounded in the uniform metric $\bar{\rho}$ relative to $d_{\mathsf{Y}}$ if $\mathcal{S}$ is forward equicontinuous.
\end{proposition}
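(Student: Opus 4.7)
The plan is to adapt the classical Arzela--Ascoli total-boundedness argument, with the asymmetry of $d_{\mathsf{Y}}$ handled by exploiting the hypotheses on $\mathsf Y$: forward compactness together with the implication that forward convergence in $\mathsf Y$ implies backward convergence gives, via Lemma \ref{lemma:4}, the equivalence of $\mathsf{T}_f$ and $\mathsf{T}_b$ on $\mathsf Y$.

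Fix an arbitrary $u\gg 0$. Forward equicontinuity of $\mathcal{S}$ produces, for each $x\in\mathsf X$, a forward neighborhood $\mathsf{B}_f(x,u_x)$ on which every $f\in\mathcal{S}$ varies by less than $u/4$. Since $\mathsf X$ is forward compact, extract a finite subcover $\mathsf{B}_f(x_1,u_{x_1}),\ldots,\mathsf{B}_f(x_k,u_{x_k})$. On the target side, Proposition \ref{prop:3} makes $\mathsf Y$ forward totally bounded, while Lemma \ref{lemma:4} forces $\mathsf{T}_f=\mathsf{T}_b$ on $\mathsf Y$; this lets us pick finitely many points $y_1,\ldots,y_m\in\mathsf Y$ whose forward $u/4$-balls cover $\mathsf Y$ and, exploiting equivalence of the two topologies, arranged so that each point in such a ball is also within backward distance $u/4$ of its centre.

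For every $\phi:\{1,\ldots,k\}\to\{1,\ldots,m\}$, set
\[A_\phi=\{f\in\mathcal{S}:f(x_i)\in \mathsf{B}_f(y_{\phi(i)},u/4)\ \text{for all}\ i\}.\]
Only finitely many $A_\phi$ can occur, every $f\in\mathcal{S}$ lies in at least one of them by the covering property of the $y_j$, and we select a representative $f_\phi$ from each nonempty $A_\phi$. These finitely many $f_\phi$ will be our forward $u$-net in $\bar{\rho}$. Indeed, given $f\in A_\phi$ and $x\in\mathsf X$, pick $i$ with $x\in\mathsf{B}_f(x_i,u_{x_i})$ and apply the triangle inequality
\[d_{\mathsf{Y}}(f(x),f_\phi(x))\leq d_{\mathsf{Y}}(f(x),f(x_i))+d_{\mathsf{Y}}(f(x_i),y_{\phi(i)})+d_{\mathsf{Y}}(y_{\phi(i)},f_\phi(x_i))+d_{\mathsf{Y}}(f_\phi(x_i),f_\phi(x)).\]
Equicontinuity controls the first and fourth terms by $u/4$, membership in $A_\phi$ controls the third, and the symmetric choice of the $y_j$ controls the second, so $d_{\mathsf{Y}}(f(x),f_\phi(x))\ll u$ for every $x\in\mathsf X$; taking the supremum yields $\bar{\rho}(f,f_\phi)\ll u$.

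The central obstacle is precisely that middle summand: in a general quasi-cone metric space the forward condition $f(x_i)\in \mathsf{B}_f(y_{\phi(i)},u/4)$ gives no information about the backward quantity $d_{\mathsf{Y}}(f(x_i),y_{\phi(i)})$. Lemma \ref{lemma:4} is exactly what allows us to refine the cover of $\mathsf Y$ so that forward and backward proximities agree up to $u/4$, converting the asymmetric triangle step into a routine estimate and closing the argument.
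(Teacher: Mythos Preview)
Your argument is essentially the paper's: both invoke Lemma~\ref{lemma:4} to symmetrize the situation on $\mathsf Y$ (obtaining backward equicontinuity and a two-sided small covering of $\mathsf Y$), extract a finite forward cover of $\mathsf X$ by equicontinuity balls, index representatives $f_\phi$ by maps $\phi$ into the finite cover of $\mathsf Y$, and finish with a triangle-inequality chain. The only cosmetic difference is that the paper uses a three-term chain with $u/3$ (covering $\mathsf Y$ by sets $\mathsf V_i$ of diameter $\ll u/3$, so $d_{\mathsf Y}(f_\phi(b_i),f(b_i))$ is handled in one step), whereas you route through the centres $y_{\phi(i)}$ and use four terms with $u/4$.

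One slip to fix: your final estimate bounds $d_{\mathsf Y}(f(x),f_\phi(x))$ and hence $\bar\rho(f,f_\phi)\ll u$, which places $f_\phi$ in the forward $u$-ball around $f$, i.e.\ the \emph{backward} containment. Forward total boundedness requires $\bar\rho(f_\phi,f)\ll u$. Simply interchange the roles of $f$ and $f_\phi$ in your chain; all four ingredients (forward and backward equicontinuity, forward and backward proximity to $y_{\phi(i)}$) are symmetric in the two functions, so the corrected estimate goes through identically.
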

\begin{proof}
Suppose that $\mathcal{S}$ is forward equicontinuous. Then using Lemma \ref{lemma:4}, $\mathcal{S}$ is also backward equicontinuous. To prove that $\mathcal{S}$ is forward totally bounded, we shall show that for $0 \ll u\ll 1,$ the finite number of forward $u$-balls form an open cover of $\mathcal{S}$ in the metric $\bar{\rho}.$ Take $w=\frac{u}{3}.$ Choose a point $b$ in $\mathsf{X}.$ Then there exists $w_{b}\gg 0$ such that $d_{\mathsf{Y}}(f(b),f(\mathsf x))\ll w$ whenever $d_{\mathsf{X}}(b, \mathsf x)\ll w_{b},$ for each $\mathsf x\in \mathsf{X}$ and $f\in \mathcal{S}.$ Suppose the collection $\mathcal{B}=\{\mathsf{B}_{f}(b,w_{b}):b=b_{1},b_{2},\ldots,b_{l}\}$ is an open covering of $\mathsf{X}$ since $\mathsf{X}$ is forward compact. The finite number of open sets $\mathsf{V_{1}},\mathsf{V_{1}},\ldots,\mathsf{V_{k}}$ can cover $\mathsf{Y}$ as $\mathsf{Y}$ is also forward compact. Note that diameter of $\mathsf{V_{i}}$ denoted by dia$(V_{i})=\displaystyle\max_{\mathsf x, \mathsf y\in \mathsf{V}}d(\mathsf x, \mathsf y)\ll w,i=1,2,\ldots,k.$ and $\mathsf{B_{f}}(b,u)\subset \mathsf{B_{f}}(b,2u).$ Consider the set $\mathcal{J}=\{\beta|\beta:\{1,\ldots,l\}\to\{1,\ldots,k\}\}.$Choose a function $\beta$ in $\mathcal{S}.$ Then there exists $f$ in $\mathcal{S}$ such that $f(b_{i})\in \mathsf{V}_{\beta_{i}}$ where $i=1,2,\ldots,l.$ Take one function and mark it as $f_{\beta}.$ Next consider the sequence $\{f_{\beta}\}$ and the set $\mathcal{J}_{1}=\{\beta \in \mathcal{J}:f(b_{i})\in \mathsf{V}_{\beta_{i}}\}$ Then $\mathcal{J}_{1}$ indexes the sequence $\{f_{\beta}\}.$ Hence $\mathcal{J}_{1}$ is contained in $\mathcal{J}.$ As a result, $\{f_{\beta}\}$ is finite.\\
Claim: $\mathcal{B}=\{(\mathsf{B}_{f})_{\bar{\rho}}(f_{\beta},u):\beta\in \mathcal{J}_{1}\}$ is an open covering for $\mathcal{S}.$\\ 
Suppose $f\in \mathcal{S}$ and take $\beta_{i},i=1,2,\ldots,k$ so that $f(\beta_{i})\in \mathsf{V}_{\beta_{i}}.$ This implies that $\beta \in \mathcal{J}_{1}.$ Next we shall prove that $(\mathsf{B}_{f})_{\bar{\rho}}(f_{\beta},u)$ contains $f.$ Choose $i$ such that $\mathsf x\in \mathsf{B}_{f}(b_{i},w_{i}),$ where $\mathsf x \in \mathsf{X}.$ Then $d(\mathsf x,b_{i})\ll w.$ Since $\mathcal{S}$ is equicontinuous. Then we have $d_{\mathsf{Y}}(f_{\beta}(\mathsf x),f_{\beta}(b_{i}))\ll w.$ Now, $\mathsf{V}_{\beta_{i}}$ contains $f(b_{i})$ and $f_{\beta}(b_{i}).$ Then $d_{\mathsf{Y}}(f_{\beta}(b_{i}),f(b_{i}))\ll w.$ Also, by equicontinuity of $f$ we see that $d_{\mathsf{Y}}(f({b_{i}}),f({\mathsf x})\ll w.$ Now,
\begin{eqnarray*}
d_{\mathsf{Y}}(f_{\beta}(\mathsf x),f(\mathsf x))
&\leq& d_{\mathsf{Y}}(f_{\beta}(\mathsf x),f_{\beta}(b_{i}))+d_{\mathsf{Y}}(f_{\beta}(b_{i}),f(b_{i}))+d_{\mathsf{Y}}(f(b_{i}),f(\mathsf x))\\
&\ll&\frac{u}{3}+\frac{u}{3}+\frac{u}{3}\\
&=&u\\
&\ll&1.
\end{eqnarray*}
For each $\mathsf x\in \mathsf{X}$ the above inequality holds. Therefore,
\begin{equation*} 
\bar{\rho}(f_{\beta},f)=\max\{\bar{d}(f_{\beta}(\mathsf x),f(\mathsf x))\}\ll u.
\end{equation*}
Hence the claim.
\end{proof}
\begin{lemma}\cite{YHC}\label{lemma:5}
Let $(\mathsf{X},d_{\mathsf{X}})$ and $(\mathsf{Y},d_{\mathsf{Y}})$ be two quasi cone metric spaces. Then $\mathsf{Y}^{\mathsf{X}}$ is forward complete if $\mathsf{Y}$ is forward complete.
\end{lemma}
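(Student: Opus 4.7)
The plan is to adapt the classical argument that the function space inherits completeness from the target under the uniform metric. Let $\{f_n\}$ be a forward Cauchy sequence in $\mathsf{Y}^{\mathsf{X}}$ relative to $\bar\rho$. First I would observe that for each fixed $\mathsf x\in \mathsf X$, the pointwise bound $\bar d_{\mathsf Y}(f_n(\mathsf x),f_m(\mathsf x))\leq \bar\rho(f_n,f_m)$ forces $\{f_n(\mathsf x)\}$ to be forward Cauchy in $\mathsf Y$. Forward completeness of $\mathsf Y$ then yields a limit $f(\mathsf x)\in \mathsf Y$, which simultaneously defines a candidate function $f\in \mathsf Y^{\mathsf X}$ for the $\bar\rho$-limit.

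Next I would show $f_n\to f$ in the uniform metric. Given $u\gg 0$, choose $N$ so that $\bar\rho(f_n,f_m)\ll u/2$ for all $m,n\geq N$. Fixing $\mathsf x\in \mathsf X$ and $n\geq N$, this gives $\bar d_{\mathsf Y}(f_n(\mathsf x),f_m(\mathsf x))\ll u/2$ for every $m\geq N$. Since $f_m(\mathsf x)$ forward converges to $f(\mathsf x)$ and the cone $\mathsf P$ is closed, passing $m\to\infty$ (via the standard argument already used in the proof of Theorem \ref{thm:Th1}) yields $\bar d_{\mathsf Y}(f_n(\mathsf x),f(\mathsf x))\leq u/2$. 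Taking the supremum over $\mathsf x$ then produces $\bar\rho(f_n,f)\leq u/2\ll u$ for all $n\geq N$, which is precisely the forward convergence of $\{f_n\}$ to $f$ in $\bar\rho$.

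The delicate point, and the one I expect to be the main obstacle, is the third step: justifying the passage to the limit $m\to\infty$ inside a strict inequality $\ll$, and then moving from the pointwise estimate to a $\bar\rho$-estimate via the supremum. The first is handled by closedness of the cone together with continuity of $d_{\mathsf Y}$ in its second slot, exactly as in Theorem \ref{thm:Th1}, where one converts $z\ll u/2 + \varepsilon$ for all $\varepsilon$ of a certain form into the weak inequality $z\leq u/2$. The second requires that the supremum defining $\bar\rho$ exists in $\mathsf E$ and respects the cone order; this is guaranteed here because $\bar d_{\mathsf Y}$ is truncated by $1$, keeping the set bounded above in the cone ordering. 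Once these two analytic points are verified, the rest of the argument is pure bookkeeping.
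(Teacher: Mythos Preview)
The paper does not supply its own proof of this lemma: it is quoted verbatim from \cite{YHC} and left unproved here, so there is nothing in the present paper to compare your argument against. Your outline is the standard completeness-lifting argument and is the natural one; the two caveats you already isolate (passing to the limit through $\ll$ via closedness of the cone, and existence of the supremum defining $\bar\rho$ in the Banach space $\mathsf E$) are exactly the points where the cone-valued setting differs from the classical real-valued case, and any full proof must address them.
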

\begin{lemma}
The space $\mathsf{Y}^{\mathsf{X}}$ is complete in the uniform metric $\bar{\rho}$ relative to $d,$ if in the quasi cone metric space $(\mathsf{Y},d)$ forward convergence implies backward convergence and $\mathsf{Y}$ is forward compact.
\end{lemma}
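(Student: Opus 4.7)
The plan is to reduce the claim to the completeness of $\mathsf{Y}$ itself, which then lifts to $\mathsf{Y}^{\mathsf{X}}$ via Lemma \ref{lemma:5}. First I would establish that $\mathsf{Y}$ is forward complete: since $\mathsf{Y}$ is assumed to be forward compact, Proposition \ref{prop:3} immediately gives that $\mathsf{Y}$ is both forward totally bounded and forward complete. Plugging this into Lemma \ref{lemma:5} yields that $\mathsf{Y}^{\mathsf{X}}$ is forward complete in the uniform metric $\bar{\rho}$.

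To convert this forward completeness into completeness in the full sense in the uniform metric, I would exploit the hypothesis that forward convergence implies backward convergence in $\mathsf{Y}$. Mirroring the argument in the proof of Lemma \ref{lemma:4}, forward compactness of $\mathsf{Y}$ together with this implication yields that $\mathsf{Y}$ is forward sequentially compact (by Lemma \ref{lemma:1}), hence backward sequentially compact, hence backward totally bounded (by Proposition \ref{prop:2}), and finally backward compact (by Proposition \ref{prop:1}). Under these joint compactness conditions, the forward and backward topologies on $\mathsf{Y}$ coincide, so any forward Cauchy sequence in $\mathsf{Y}$ is also a backward Cauchy sequence and vice versa, and their forward/backward limits agree.

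The final step is to transfer this equivalence from $\mathsf{Y}$ to $\mathsf{Y}^{\mathsf{X}}$ endowed with $\bar{\rho}$. Given a Cauchy sequence $\{f_n\}$ in $(\mathsf{Y}^{\mathsf{X}},\bar{\rho})$, for each $\mathsf x\in\mathsf{X}$ the sequence $\{f_n(\mathsf x)\}$ is Cauchy in $\mathsf{Y}$ and hence forward (equivalently backward) convergent to some $f(\mathsf x)\in\mathsf{Y}$. The uniform control built into $\bar{\rho}$ upgrades pointwise convergence to uniform convergence, and part (b) of Lemma \ref{lemma:4} guarantees that uniform backward convergence and uniform forward convergence are the same notion in this setting. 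Therefore the candidate limit $f$ is an element of $\mathsf{Y}^{\mathsf{X}}$ to which $\{f_n\}$ converges in $\bar{\rho}$.

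The main obstacle I foresee is making precise that uniform control over the pointwise distances $\bar{d}_{\mathsf{Y}}(f_n(\mathsf x),f(\mathsf x))$ is preserved when passing between the two topologies on $\mathsf{Y}$; a naive $\ll u$ argument at each point does not obviously survive the supremum defining $\bar{\rho}$. The compactness hypotheses circumvent this by forcing equivalence of the two topologies globally rather than only pointwise, which is exactly why Lemma \ref{lemma:4}(b) does the heavy lifting in the last step.
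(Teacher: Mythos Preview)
Your proposal is correct and follows essentially the same skeleton as the paper: use Proposition~\ref{prop:3} to get forward completeness of $\mathsf{Y}$, pass to pointwise limits of a Cauchy sequence in $\mathsf{Y}^{\mathsf{X}}$, and finish with Lemma~\ref{lemma:4}(b). The one organizational difference is that you cite Lemma~\ref{lemma:5} to obtain forward completeness of $\mathsf{Y}^{\mathsf{X}}$ and then argue abstractly via equivalence of the forward and backward topologies on $\mathsf{Y}$, whereas the paper instead carries out the concrete $\varepsilon/2$ estimate $\bar d(f_k(\mathsf x),f(\mathsf x))\le \bar d(f_k(\mathsf x),f_l(\mathsf x))+\bar d(f_l(\mathsf x),f(\mathsf x))$ directly to show uniform \emph{backward} convergence first, and only then invokes Lemma~\ref{lemma:4}(b) to upgrade to uniform forward convergence. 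Your route is slightly more economical in citations; the paper's is more explicit at exactly the point you flag as the ``main obstacle,'' namely that the uniform Cauchy bound survives the passage to the limit independently of $\mathsf x$.
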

\begin{proof}
We can easily see that $(\mathsf{Y},d)$ is forward complete as $\mathsf{Y}$ is forward compact, using Proposition \ref {prop:3}. Then $(\mathsf{Y},\bar{d})$ is also forward complete. We shall now show that $(\mathsf{Y},\bar{d})$ is backward complete. Suppose that $\{f_{k}(\mathsf x)\}$ is a forward Cauchy sequence in $\mathsf{Y}^{\mathsf{X}}$ corresponding to $\bar{\rho}.$ Then $\{f_{k}(\mathsf x)\}$ is forward Cauchy in $(\mathsf{Y},\bar{d}).$ Then there exists $\mathsf y'$  such that  $\{f_{k}(\mathsf x)\}$ is forward and backward convergent to $\mathsf y'.$ Define a function $f:\mathsf{X}\to \mathsf{Y}$ by $f(\mathsf x)=\mathsf y'.$ We shall first show that $\{f_{k}(\mathsf x)\}$ is backward convergent to $f$ in order to show that $\{f_{k}(\mathsf x)\}$ is forward convergent to $f$ relative to $\bar{\rho}.$ Choose $u\gg 0$ and $\mathsf{N}\in \mathbb{N}$ such that $\bar{\rho}(f_{k},f_{l})\ll \frac{u}{2}$ for $k\geq l\geq \mathsf{N}.$ Particularly, for each $\mathsf x\in \mathsf{X}$ we have $\bar{d}(f_{k}(\mathsf x),f_{l}(\mathsf x))\ll \frac{u}{2}.$ Let us fix $\mathsf x$ and $k.$ For $l$ sufficiently large, we see that 
\begin{equation*}
\bar{d}(f_{k}(\mathsf x),f(\mathsf x))\leq \bar{d}(f_{k}(\mathsf x),f_{l}(\mathsf x))+\bar{d}(f_{l}(\mathsf x),f(\mathsf x))\ll u,\forall \mathsf x\in \mathsf{X},k\geq\mathsf{N}.
\end{equation*}
Therefore, we deduce that $\bar{\rho}(f_{k},f)\ll u,k\geq \mathsf{N}.$ This shows that $\{f_{k}(\mathsf x)\}$ is also uniformly backward convergent. Hence using Lemma {\ref{lemma:4}}, we deduce that $\{f_{k}(\mathsf x)\}$ is also uniformly forward convergent. Hence the proof.
\end{proof}
\begin{lemma}\cite{YHC}\label{lemma:6}
Let $(\mathsf{X},d_{\mathsf{X}})$ and $(\mathsf{Y},d_{\mathsf{Y}})$ be two quasi cone metric spaces and forward convergence in $\mathsf{Y}$ is equivalent to backward convergence. Then the set of all $ff$-continuous functions, $\mathsf{C}(\mathsf{X},\mathsf{Y})$ is a closed subset of all continuous functions. 
\end{lemma}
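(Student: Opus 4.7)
The plan is to interpret the statement as saying $\mathsf{C}(\mathsf{X},\mathsf{Y})$ is closed in $\mathsf{Y}^{\mathsf{X}}$ with respect to the uniform metric $\bar{\rho}$ associated to $d_{\mathsf{Y}}$, and to prove this by the standard $u/3$-argument adapted to the quasi cone setting. First I would take a sequence $\{f_{n}\} \subset \mathsf{C}(\mathsf{X},\mathsf{Y})$ with $f_{n} \to f$ forward in $(\mathsf{Y}^{\mathsf{X}},\bar{\rho})$ for some $f \in \mathsf{Y}^{\mathsf{X}}$, and show that $f$ is $ff$-continuous at an arbitrary point $\mathsf{x} \in \mathsf{X}$.

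To do this, fix a forward convergent sequence $\mathsf{x}_{k} \to \mathsf{x}$ in $\mathsf{X}$ and an arbitrary $u \gg 0$. The key preparatory step is to observe that, because forward convergence in $\mathsf{Y}$ coincides with backward convergence, the uniform convergence $f_{n} \to f$ in $\bar{\rho}$ is simultaneously forward and backward: that is, for every $v \gg 0$ there is $N$ such that both $\bar{d}_{\mathsf{Y}}(f_{N}(\mathsf{z}),f(\mathsf{z})) \ll v$ and $\bar{d}_{\mathsf{Y}}(f(\mathsf{z}),f_{N}(\mathsf{z})) \ll v$ hold uniformly in $\mathsf{z}$. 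Taking $v = u/3$ (and $v \ll 1$ so that the truncation $\bar{d}_{\mathsf{Y}}$ coincides with $d_{\mathsf{Y}}$), this gives the two outer estimates I will need.

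Next I would use $ff$-continuity of $f_{N}$: since $\mathsf{x}_{k} \to \mathsf{x}$ forward, we have $d_{\mathsf{Y}}(f_{N}(\mathsf{x}),f_{N}(\mathsf{x}_{k})) \ll u/3$ for $k$ sufficiently large. Then the triangle inequality in the quasi cone metric
\begin{equation*}
d_{\mathsf{Y}}(f(\mathsf{x}),f(\mathsf{x}_{k})) \leq d_{\mathsf{Y}}(f(\mathsf{x}),f_{N}(\mathsf{x})) + d_{\mathsf{Y}}(f_{N}(\mathsf{x}),f_{N}(\mathsf{x}_{k})) + d_{\mathsf{Y}}(f_{N}(\mathsf{x}_{k}),f(\mathsf{x}_{k}))
\end{equation*}
gives each summand $\ll u/3$, and hence $d_{\mathsf{Y}}(f(\mathsf{x}),f(\mathsf{x}_{k})) \ll u$ eventually. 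This shows $f(\mathsf{x}_{k}) \to f(\mathsf{x})$ forward, so $f \in \mathsf{C}(\mathsf{X},\mathsf{Y})$.

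The main obstacle is the asymmetric bookkeeping: in a quasi cone metric, $d_{\mathsf{Y}}(f(\mathsf{x}),f_{N}(\mathsf{x}))$ and $d_{\mathsf{Y}}(f_{N}(\mathsf{x}),f(\mathsf{x}))$ are genuinely different, and the third term appearing in the triangle estimate above is of the ``backward'' type while the uniform metric $\bar{\rho}$ is most naturally read in the ``forward'' direction. The hypothesis that forward and backward convergence coincide in $\mathsf{Y}$ is precisely what bridges this gap, so the entire proof hinges on invoking it at the right moment; a secondary minor point is keeping track of the truncation in $\bar{d}_{\mathsf{Y}}$ so that the passage from $\bar{d}_{\mathsf{Y}}$-estimates to $d_{\mathsf{Y}}$-estimates is legitimate, which is handled by shrinking $u$ below $1$ at the outset.
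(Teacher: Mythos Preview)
The paper does not supply its own proof of this lemma: it is stated with a citation to \cite{YHC} and used as an imported result, so there is no in-paper argument to compare against. Your proposed $u/3$-argument is the standard and correct proof; in particular your identification of the asymmetric bookkeeping issue and the role of the hypothesis (forward $\Leftrightarrow$ backward convergence in $\mathsf{Y}$) in controlling the ``wrong-direction'' term $d_{\mathsf{Y}}(f_{N}(\mathsf{x}_{k}),f(\mathsf{x}_{k}))$ is exactly right, and matches how the paper uses this hypothesis elsewhere (e.g.\ in the proof of the Arzel\`a--Ascoli theorem and Proposition~\ref{prop:5}).

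One small point worth tightening: your passage from ``$f_{n}\to f$ forward in $\bar\rho$'' to ``also backward in $\bar\rho$'' is asserted as a consequence of forward $\Leftrightarrow$ backward convergence in $\mathsf{Y}$, but strictly speaking that hypothesis is a pointwise statement about sequences in $\mathsf{Y}$, not about convergence in $(\mathsf{Y}^{\mathsf{X}},\bar\rho)$. The cleanest fix is to argue pointwise: for each fixed $\mathsf{z}$, forward uniform convergence gives $d_{\mathsf{Y}}(f_{n}(\mathsf z),f(\mathsf z))\to 0$, hence by the hypothesis $d_{\mathsf{Y}}(f(\mathsf z),f_{n}(\mathsf z))\to 0$ as well; this pointwise backward control at $\mathsf z=\mathsf x$ is all you actually need for the first summand in your triangle estimate, so uniform backward convergence is not required.
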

\begin{proposition}\label{prop:5}
 Let $(\mathsf{X},d_{\mathsf{X}})$ and $(\mathsf{Y},d_{\mathsf{Y}})$ be two quasi cone metric spaces. Then the space $\mathsf{C}(\mathsf{Y},\mathsf{X})$ is complete in the uniform metric $\bar{\rho}$ relative to $d_{Y},$ if in the quasi cone metric space $(\mathsf{Y},d)$ forward convergence implies backward convergence and $\mathsf{Y}$ is compact.
\end{proposition}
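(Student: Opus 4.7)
The plan is to deduce this proposition by combining the preceding unnamed lemma (which asserts that $\mathsf{Y}^{\mathsf{X}}$ is complete in $\bar{\rho}$ under precisely the stated hypotheses on $\mathsf{Y}$) with Lemma \ref{lemma:6}, and then to invoke the standard fact that a forward closed subset of a forward complete quasi cone metric space is itself forward complete.

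First, I would apply the previous lemma directly: since $\mathsf{Y}$ is (forward) compact and forward convergence in $\mathsf{Y}$ implies backward convergence, that lemma yields that $\mathsf{Y}^{\mathsf{X}}$ is complete in the uniform metric $\bar{\rho}$. Consequently, any forward Cauchy sequence $\{f_{n}\}\subset \mathsf{C}(\mathsf{X},\mathsf{Y})$ admits a uniform forward limit $f\in \mathsf{Y}^{\mathsf{X}}$.

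Next, to place $f$ back inside $\mathsf{C}(\mathsf{X},\mathsf{Y})$, I would invoke Lemma \ref{lemma:6}. Its hypothesis is that forward convergence in $\mathsf{Y}$ is equivalent to backward convergence. One implication is given. For the converse, since $\mathsf{Y}$ is forward compact, Lemma \ref{lemma:1} supplies forward sequential compactness of $\mathsf{Y}$, and then Lemma \ref{lemma:3} forces backward convergence in $\mathsf{Y}$ to imply forward convergence. So Lemma \ref{lemma:6} applies and $\mathsf{C}(\mathsf{X},\mathsf{Y})$ is a forward closed subset of $\mathsf{Y}^{\mathsf{X}}$ in the topology of $\bar{\rho}$.

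Finally, closedness of $\mathsf{C}(\mathsf{X},\mathsf{Y})$ in the complete ambient space $(\mathsf{Y}^{\mathsf{X}},\bar{\rho})$ forces the uniform forward limit $f$ of $\{f_{n}\}$ to lie in $\mathsf{C}(\mathsf{X},\mathsf{Y})$, which is exactly completeness. The only non-bookkeeping step is verifying the equivalence of the two modes of convergence on $\mathsf{Y}$; once that is in hand via the chain Lemma \ref{lemma:1} $\Rightarrow$ Lemma \ref{lemma:3}, the proof is simply an assembly of the preceding results, so I expect no substantive obstacle beyond correctly identifying which previously established facts to cite.
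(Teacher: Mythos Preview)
Your proposal is correct and matches the paper's own argument essentially step for step: establish the equivalence of forward and backward convergence in $\mathsf{Y}$ via Lemma~\ref{lemma:1} followed by Lemma~\ref{lemma:3}, invoke Lemma~\ref{lemma:6} to obtain closedness of $\mathsf{C}(\mathsf{X},\mathsf{Y})$ in $\mathsf{Y}^{\mathsf{X}}$, and combine this with completeness of the ambient space. If anything, your version is slightly more careful, since you explicitly cite the preceding lemma to justify that a forward Cauchy sequence has a limit in $\mathsf{Y}^{\mathsf{X}}$, whereas the paper tacitly assumes this.
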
 
\begin{proof}
Take a sequence $\{f_{k}\}$ in $\mathsf{C}(\mathsf{X},\mathsf{Y})$ that is uniformly convergent $f\in {\mathsf{Y}}^{\mathsf{X}}$ corresponding to $\bar{\rho}.$ It is assumed that forward convergence implies backward convergence. Now, using Lemma {\ref{lemma:1}}, we see that $\mathsf{Y}$ is forward sequentially compact as $\mathsf{Y}$ is forward compact. So, by Lemma {\ref{lemma:3}}, backward convergence implies forward convergence. Thus, both forward and backward convergence are equivalent.Then  using Lemma {\ref{lemma:6}}, we infer that $\mathsf{C}(\mathsf{X},\mathsf{Y})$ is closed. Hence $f\in \mathsf{C}(\mathsf{X},\mathsf{Y}).$ From this we deduce that $\mathsf{C}(\mathsf{X},\mathsf{Y})$ is complete in the uniform metric $\bar{\rho}$ corresponding to $d.$ Hence the proof.
\end{proof}
\begin{definition}
Let $(\mathsf{X},d_{\mathsf{X}})$ and $(\mathsf{Y},d_{\mathsf{Y}})$ be two quasi cone metric spaces. A set $\mathcal{S}\subset \mathsf{C}(\mathsf{X},\mathsf{Y})$ is said to be forward pointwise bounded with respect to $d_{\mathsf{Y}}$ if for each $b\in \mathsf{X},$ the set $\mathcal{S}_{b}=\{f({b}):f\in \mathcal{S}\subseteq \mathsf{Y}\}$ is bounded in $d_{\mathsf{Y}}.$
\end{definition}  

Finally, we shall prove Arzela Ascoli Theorem in quasi cone metric space.

\begin{theorem}(Arzela Ascoli Theorem)
Let $(\mathsf{X},d_{\mathsf{X}})$ and $(\mathsf{Y},d_{\mathsf{Y}})$ be two quasi cone metric spaces where $\mathsf{X}$ is forward compact. In $\mathsf{Y}$ forward convergence implies backward convergence, bounded and closed sets are forward compact in $\mathsf{Y}.$ The space $\mathsf{C}(\mathsf{X},\mathsf{Y})$ is endowed with $\bar{\rho}$ where $\bar{\rho}$ is the uniform metric corresponding to $d_{\mathsf{Y}}.$ If a set $\mathcal{S}\subset \mathsf{C}(\mathsf{X},\mathsf{Y})$ is forward equicontinuous and forward pointwise bounded under $d_{\mathsf{Y}},$ then $\mathcal{S}$ is forward relatively compact.
\end{theorem}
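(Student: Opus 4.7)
I plan to mimic the classical Arzela--Ascoli argument and reduce everything to Proposition~\ref{prop:3}: the goal is to show that the forward closure $\overline{\mathcal{S}}$ is simultaneously forward totally bounded and forward complete in $\bar{\rho}$. The key preparatory step is to produce a forward compact subspace $\mathsf{K}\subset\mathsf{Y}$ containing $f(\mathsf{X})$ for every $f\in\mathcal{S}$, so that Propositions~\ref{prop:4} and~\ref{prop:5} can be invoked with $\mathsf{Y}$ replaced by $\mathsf{K}$.

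To construct $\mathsf{K}$, fix any $w\gg 0$. For each $b\in\mathsf{X}$, forward equicontinuity supplies $u_b\gg 0$ such that $d_{\mathsf{Y}}(f(b),f(\mathsf{x}))\ll w$ for every $f\in\mathcal{S}$ and every $\mathsf{x}\in \mathsf{B}_f(b,u_b)$. Forward compactness of $\mathsf{X}$ extracts a finite subcover $\mathsf{B}_f(b_1,u_{b_1}),\ldots,\mathsf{B}_f(b_l,u_{b_l})$. Because $\mathcal{S}$ is forward pointwise bounded, each of the finitely many sets $\{f(b_i):f\in\mathcal{S}\}$ is forward bounded in $\mathsf{Y}$ and so is their finite union; combined with the uniform oscillation bound $\ll w$, this forces the total image $\bigcup_{f\in\mathcal{S}}f(\mathsf{X})$ to be forward bounded in $\mathsf{Y}$. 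Taking the forward closure gives $\mathsf{K}$, which is forward bounded and forward closed, hence forward compact by the hypothesis that closed bounded subsets of $\mathsf{Y}$ are forward compact. This uniform-boundedness step is the principal obstacle, as it is the only place where the pointwise bound must be upgraded to a bound uniform over $\mathcal{S}$ and $\mathsf{X}$.

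With $\mathsf{K}$ in hand, the remainder is assembly. The pair $(\mathsf{X},\mathsf{K})$ satisfies the hypotheses of Propositions~\ref{prop:4} and~\ref{prop:5}: both factors are forward compact and the forward-implies-backward property on $\mathsf{Y}$ restricts to $\mathsf{K}$. Thus Proposition~\ref{prop:4} yields that $\mathcal{S}$ is forward totally bounded in $\bar{\rho}$, and Proposition~\ref{prop:5} yields that $\mathsf{C}(\mathsf{X},\mathsf{K})$ is forward complete in $\bar{\rho}$. Since $\bar{\rho}$-convergence implies pointwise forward convergence in $\mathsf{Y}$ and $\mathsf{K}$ is forward closed, the forward closure of $\mathcal{S}$ inside $\mathsf{C}(\mathsf{X},\mathsf{Y})$ actually lies in $\mathsf{C}(\mathsf{X},\mathsf{K})$; it inherits forward total boundedness from $\mathcal{S}$ and forward completeness as a closed subset of $\mathsf{C}(\mathsf{X},\mathsf{K})$. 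Proposition~\ref{prop:3} then declares $\overline{\mathcal{S}}$ forward compact, which is precisely the assertion that $\mathcal{S}$ is forward relatively compact.
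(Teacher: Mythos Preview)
Your proposal is correct and follows essentially the same route as the paper: construct a forward compact $\mathsf{K}\subseteq\mathsf{Y}$ containing all images by combining equicontinuity, a finite subcover of $\mathsf{X}$, and pointwise boundedness at the finitely many centers, then invoke Propositions~\ref{prop:4}, \ref{prop:5}, and \ref{prop:3} to conclude. The only cosmetic difference is that the paper first extends equicontinuity and pointwise boundedness to $\overline{\mathcal{S}}$ and applies Proposition~\ref{prop:4} directly to the closure, whereas you apply it to $\mathcal{S}$ and then pass total boundedness to $\overline{\mathcal{S}}$ afterward.
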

\begin{proof}
$\bar{\mathcal{S}}$ is forward equicontinuous: For $u\gg 0$ and $\zeta_{0}\in \mathsf{X},$ take $w\gg 0$ such that  $d_{\mathsf{X}}(\zeta_{0},\zeta)\ll w.$ By equicontinuity of $\mathcal{S},$ we have $d_{\mathsf{Y}}(f(\zeta_{0})f(\zeta))\ll \frac{u}{3},$ for each $f\in \mathcal{S}$ and $\zeta \in \mathsf{X}.$ Let $h\in \bar{\mathcal{S}}.$ Then there exists a sequence in $\mathcal{S}$ that is forward convergent to $h.$ Also, by hypothesis this sequence is also backward convergent to $h.$ Therefore, choose an element $f$ in $\mathcal{S}$ such that $\bar{\rho}(h,f)\ll \frac{u}{3}$ and  $\bar{\rho}(f,h)\ll \frac{u}{3}.$ Hence for $d_{\mathsf{X}}(\zeta_{0},\zeta)\ll w,$ we have
\begin{eqnarray*}
d_{\mathsf{Y}}(h(\zeta_{0}),h(\zeta))
&\leq& d_{\mathsf{Y}}(h(\zeta_{0}),f(\zeta_{0})) + d_{\mathsf{Y}}(f(\zeta_{0}),f(\zeta))+d_{\mathsf{Y}}(f(\zeta),h(\zeta))\\
&\ll& \frac{u}{3}+\frac{u}{3}+\frac{u}{3}\\
&=&u.
\end{eqnarray*}
Since $h$ is an arbitary element of $\bar{\mathcal{S}}$ we conclude that $\bar{\mathcal{S}}$ is equicontinuous.\\
$\bar{\mathcal{S}}$ is forward pointwise bounded: For $b \in \mathsf{X},$ let $\xi \in \mathsf{Y}$ such that $d_{\mathsf{Y}}(\xi, f(b)\ll K,\forall f\in \mathcal{S}.$ Using the same method as earlier, let $h\in \bar{\mathcal{S}},$ there exists a sequence in $\mathcal{S}$ that is forward and backward convergent to $h$ such that $\bar{\rho}(f,h)\ll1,f\in \mathcal{S}.$ Then we have $d_{\mathsf{Y}}(\xi,h(b))\leq d_{\mathsf{Y}}(\xi, f(b))+d_{\mathsf{Y}}(f(b),h(b))\ll K+1.$ Since $h\in \bar{\mathcal{S}}$ is  arbitrary. Then the set $\{d_{\mathsf{Y}}(\xi,h(b)):h\in \bar{\mathcal{S}}\}$ is bounded. This proves that $\mathcal{S}$ is forward pointwise bounded.\\
Next we shall show that for $h\in \bar{\mathcal{S}}, \cup h(\mathsf{X})\subseteq \mathsf{E}$ where $\mathsf{E}$ is a forward compact subspace of $\mathsf{Y}.$ For this, take $w_{b}\gg 0$ for each $b\in \mathsf{X},$ we have $d_{\mathsf{Y}}(h(b),h(\xi))\ll 1, h\in \bar{\mathcal{S}}$ whenever $d_{\mathsf{X}}(b,\xi)\ll w_{b}.$ Also the set $\mathcal{B}=\{\mathsf{B}_{f}(b_{i},w_{b_{i}}),i=1,2,\ldots,m\}$ can form an open covering of $\mathsf{X}$ as $\mathsf{X}$ is forward compact. Clearly, the union of sets $\{h(b_{i}):h\in \bar{\mathcal{S}}\}$ is forward bounded since the set $\{h(b_{i}):h\in \bar{\mathcal{S}}\}$ is forward bounded. Suppose that for $\xi \in \mathsf{Y},\mathsf{N}\in \mathbb{N}$, the union is contained in $\mathsf{B}_{f}(\xi, \mathsf{N}).$ \\
Let $x\in \mathsf{X}$ and $h$ be an arbitrary element of $\bar{\mathcal{S}}.$ Since $\mathsf{X}$ is forward compact. Then $x\in \mathsf{B}_{f}(b_{i},w_{i}),i=1,2,\ldots,n.$ Hence
\begin{eqnarray*}
 d_{\mathsf{Y}}(\xi,h(\zeta))&\leq&d_{\mathsf{Y}}({\xi},h(b_{i}))+ d_{\mathsf{Y}}(h(b_{i}),h(\zeta)))\\
                             &\ll&\mathsf{N}+1.
\end{eqnarray*}
Therefore, $h(\mathsf{X})$ is contained in $\mathsf{B}_{f}(\xi,\mathsf{N}+1).$ Let $\mathsf{E}$ denotes the closure of $\mathsf{B}_{f}(\xi,\mathsf{N}+1).$ 
Hence there exists a compact subset $\mathsf{E}$ of $\mathsf{Y}$ such that $\bar{\mathcal{S}}$ is contained in $\mathsf{C}(\mathsf{X},\mathsf{E}).$ Then using Proposition \ref{prop:4}, $\bar{\mathcal{S}}$ is forward totally bounded in the uniform metric $\bar{\rho}.$ From Proposition \ref{prop:5}, we deduce that $\mathsf{C}(\mathsf{X},\mathsf{E})$  is forward complete in the uniform metric $\bar{\rho}$ as $\mathsf{E}$ is forward compact. Since $\bar{\mathcal{S}}$ is a closed subspace of  $\mathsf{C}(\mathsf{X},\mathsf{E}).$  Then $\bar{\mathcal{S}}$ is also forward complete. Proposition \ref{prop:3} now reveals that $\bar{\mathcal{S}}$ is forward compact. Hence $\mathcal{S}$ is relatively compact. This proves the theorem.
\end{proof}


\begin{thebibliography}{10}
\bibitem{AK} T. Abdeljawad and E. Karapinar, {\it Quasicone metric spaces and generalizations of Caristi Kirk's Theorem,} Fixed Point Theory and Applications \textbf{2009(1)}(2009), 9-18.
\bibitem{A} G. E. Albert, {\it A note on quasi-metric spaces, Bull. Amer. Math. Soc.,} \textbf{47(1)} (1941), 479–482.
\bibitem{CZ} J. Collins, J. Zimmer, {\it An asymmetric Arzela-Ascoli Theorem,} Topology Appl. \textbf{154}(2007), 2312-2322.
\bibitem{HZ} L. G. Huang, X. Zhang, {\it Cone metric spaces and fixed point theorems of contractive mappings,} J. Math. Annal. Appl. \textbf{332}(2007), 1468-1476.
\bibitem{JRM} James R. Munkers, {\it Topology, Second edition,} Pearson Education, Inc., (2000).

\bibitem {SN} F. Shaddad and M. S. M. Noorani, {\it Fixed Point Results in Quasi-Cone Metric Spaces,} Abstract Appl. Annal. \textbf{2013}(2013), Article ID 303626. 
\bibitem{TFAS}M. Tavakoli, A.P. Farajzadeh, T. Abdeljawad and S. Suantai, {\it Some notes on cone metric spaces,} Thai Journal of Mathematics \textbf{16(1)} (2018), 229-242. 
\bibitem{YHC} T. Yaying, B. Hazarika and H. Cakalli, {\it New results in quasi cone metric spaces,} J Math. Computer Sci. \textbf{16}(2016), 435-444.
\bibitem{YHC} T. Yaying, B. Hazarika and S. A. Mohiuddine, {\it Some new types of continuity in asymmetric metric spaces,} Ser. Math. Inform. \textbf{35(2)} (2020), 485-493.
\end{thebibliography}
\end{document}